\definecolor{darkgreen}{rgb}{0,0.75,0}
\definecolor{darkred}{rgb}{0.75,0,0}
\definecolor{darkmagenta}{rgb}{0.5,0,0.5}
\newenvironment{abstracts}{%
  \ifx\maketitle\relax
    \ClassWarning{\@classname}{Abstract should precede
      \protect\maketitle\space in AMS document classes; reported}%
  \fi
  \global\setbox\abstractbox=\vtop \bgroup
    \normalfont\Small
    \list{}{\labelwidth\z@
      \leftmargin3pc \rightmargin\leftmargin
      \listparindent\normalparindent \itemindent\z@
      \parsep\z@ \@plus\p@
      
      \itemsep\medskipamount
    }%
}{%
  \endlist\egroup
  \ifx\@setabstract\relax \@setabstracta \fi
}
\newcommand{\abstractin}[1]{%
  \otherlanguage{#1}%
  \item[\hskip\labelsep\scshape\abstractname.]%
}
\numberwithin{equation}{section}
\newcommand{\mr}[1]{{\tt \href{http://www.ams.org/mathscinet-getitem?mr=#1}{MR#1}}}
\newcommand{\arxiv}[1]{{\tt \href{http://arxiv.org/abs/#1}{arXiv:#1}}}
\newcommand{\set}[1]{\left\{ #1 \right\}}
\newcommand{\Sett}[2]{\left\{ #1 \; : \; #2 \right\}}
\newcommand{\old}[1]{}
\newcommand{\abs}[1]{{\left\vert\kern-0.25ex #1
    \kern-0.25ex\right\vert}}
\newcommand{\df}{{d_f}} 
\newcommand{\dw}{{d_w}} 
\newcommand{\osc}{\operatorname{osc}}	
\DeclareRobustCommand{\SkipTocEntry}[5]{}
\newcommand{\G}{\mathcal{G}}
\newtheorem{theorem}{Theorem}[section]
\newtheorem{prop}[theorem]{Proposition}
\newtheorem{lemma}[theorem]{Lemma}
\newtheorem{lem}[theorem]{Lemma}
\newtheorem{corollary}[theorem]{Corollary}
\theoremstyle{remark}
\newtheorem*{remark}{Remark}
\newtheorem{example}{Example}
\numberwithin{counter}{section}
\theoremstyle{definition}
\newtheorem{definition}[theorem]{Definition}
\def\ind{\mathbf{1} } 
\def\00{\mathbf{0}}
\def\one{\mathbf{1}}
\def\E{\mathcal{E}}
\def\N{\mathbb{N}}
\def\R{\mathbb{R}}
\def\EE{\mathbb{E}}
\def\PP{\mathbb{P}}
\newcommand\norm[1]{\left\lVert#1\right\rVert} 
\begin{document}

\title[Anomalous heavy-tailed random walks]{Heat kernel estimates for anomalous heavy-tailed random walks}
\author[M. Murugan]{Mathav Murugan$^\ast$}
\thanks{$\ast$ M. Murugan was  partially supported by NSERC (Canada) 
 and the Pacific Institute for the Mathematical Sciences}\address{Department of Mathematics, University of British Columbia and Pacific Institute for the Mathematical Sciences, Vancouver, BC V6T 1Z2, Canada.}
\email[M. Murugan]{mathav@math.ubc.ca$^\ast$}

\author[L. Saloff-Coste]{Laurent Saloff-Coste$^\dagger$}
\thanks{$\dagger$ L. Saloff-Coste was partially supported by NSF grant DMS 1404435}\address{Department of Mathematics, Cornell University, Ithaca, NY 14853, USA.}
\email[L. Saloff-Coste]{lsc@math.cornell.edu}
\date{\today}
\subjclass[2010]{60J10, 60J75}
\date{\today}
\begin{abstracts}
\abstractin{english}
Sub-Gaussian estimates for the natural random walk is typical of many regular fractal graphs. Subordination shows that there exist heavy tailed jump processes whose jump indices are greater than or equal to two.
However, the existing machinery used to prove heat kernel bounds for such heavy tailed random walks fail in this case.
In this work we extend Davies' perturbation method to obtain transition probability bounds for these anomalous heavy tailed random walks.
We prove global upper and lower bounds on the transition probability density that are sharp up to constants.  An important feature of our work is that the methods we develop are robust to small perturbations of the symmetric jump kernel.
\abstractin{french}
Pour de nombreux graphes r\'{e}guliers de type fractal, la marche al\'{e}atoire simple satisfait de estimation de type sous-Gaussiennes. La technique de la subordination montre alors qu'il existe the processus t\^{e}te type stage dont l'indice des sauts est sup\'{e}rieur ou \'{e}gale a 2. Pour de telles processus, les techniques usuelles pour les estimations loin de la diagonale  ne fonctionnent pas. Nous \'{e}tendons la c\'{e}l\`{e}bre m\'{e}thode de Davies dans le cas de ces processus \`{a} sauts ``anormaux".  Nous obtenons des bornes sup\'{e}rieures et inférieures  pr\'{e}cises sur  le noyaux de transition pas des m\'{e}thodes qui sont stables sous de petites perturbations des sauts.   

\end{abstracts}

\maketitle

\section{Introduction}
The motivation behind this work is to obtain transition probability estimates for a class of random walks with heavy tailed jumps on fractal-like graphs. Many fractals satisfy the following sub-Gaussian estimates on the transition probability density $p_t$ for the natural diffusion: there exists $c_1,c_2,C_3,C_4>0$ such that
\begin{align}\label{e-subg}
\MoveEqLeft{\frac{c_1}{t^{\df/\dw}} \exp \left( - \left( \frac{d(x,y)^\dw}{c_2 t}\right)^{1/(\dw-1)} \right) } \nonumber \\
& \le p_t(x,y) \le \frac{C_3}{t^{\df/\dw}} \exp \left( - \left( \frac{d(x,y)^\dw}{C_4 t}\right)^{1/(\dw-1)} \right)
\end{align}
for all points $x,y$ in the underlying space and for all times $t>0$ \cite{Bar0,BP,BB1,HK,Kig}. The parameters $\df,\dw > 0$ are intrinsic and depend on the geometry of the underlying space.  Sub-Gaussian estimates similar to \eqref{e-subg} were obtained in the discrete time setting for the nearest neighbor walks on fractal like graphs \cite{Bar,Jon,GT,GT0,BCK,BB2}. A precise formulation of these sub-Gaussian estimates for graphs is provided in \ref{usg} and \ref{lsg} in Section \ref{ss-example}.

If $\mathcal{L}$ denotes the generator of a diffusion whose heat kernel satisfies the sub-Gaussian estimate \eqref{e-subg}, then  for any $\beta \in (0,\dw)$, the
operator $- (-\mathcal{L})^{\beta/\dw}$,
is the generator of a non-local Dirichlet form, and its
heat kernel admits the estimate
\begin{equation} \label{e-ht}
p_t(x,y) \asymp  \frac{1}{t^{\df/\beta}} \left( 1 + \frac{d(x,y)}{t^{1/\beta}} \right)^{- (\df+\beta)}
\end{equation}
for all times $t$ and for all points $x,y$ in the underlying space (see, for example, \cite{Sto}, \cite{Kum}, \cite[Lemma 5.4]{Gri}).
Here $\asymp$ means that the ratio between both sides of the expression is bounded above and below by finite positive constants that do not depend on $x,y,t$.
Estimates similar to \eqref{e-ht} can be established using subordination in the discrete time context as well \cite[Section 5]{MS2}.
Furthermore, it was shown in \cite{GK} that \eqref{e-subg} and \eqref{e-ht} exhaust all possible two-sided estimates of heat kernels of self-similar type.
We call the parameter $\beta$ in \eqref{e-ht}  the \emph{jump index}.

Obtaining estimates of the form \eqref{e-ht} for the case $\beta \in (0,2)$ has received much attention recently, both in the context of continuous time jump processes (see, for example, \cite{CK1,CK2,BBK,BGK})  and discrete time heavy tailed random walks (see \cite{BL,MS1}). For the case $\beta \in (0,2)$, Davies' method is used to obtain estimates of the form \eqref{e-ht} which are stable under bounded perturbations of the corresponding Dirichlet form. However it is known that the use of Davies' method to obtain the upper bound in \eqref{e-ht} is no longer adequate for the case $\beta \ge 2$ (see \cite[Section 1]{GHL},
\cite[Remark 1.4(d)]{MS2}).

 Recently characterizations of bounds of the form \eqref{e-ht} for the case $\beta \ge 2$  have been given in  \cite[Theorem 2.1 and Theorem 2.3]{GHL}.
However these characterizations rely on probabilistic estimates on the exit times (called survival estimates and tail estimates in \cite{GHL}).
 It is not known if these survival and tail estimates are stable under bounded perturbation of the jump kernel of the jump process in the continuous time case or the conductance of the heavy tailed random walk in the discrete time case.
 Therefore there is a need to develop techniques to obtain \eqref{e-ht} that are robust under  bounded perturbation of the Dirichlet form for the case $\beta  \ge 2$ -- see Example \ref{x-one} for a concrete example that compares our result with previous works. 

 The main goal of the work is to show that the estimate \eqref{e-ht} is stable under bounded perturbation of conductances for a heavy tailed random walk in  the case $\beta \ge 2$, under some natural hypothesis. This is carried out by modifying the Davies' perturbation method using a cutoff Sobolev inequality. We extend the techniques developed in \cite{MS3} to a non-local setting. Although our work concerns discrete time heavy tailed random walks, we expect that the techniques we develop extends to continuous time jump processes as well.
For instance, in the proof of upper bound  (see Proposition \ref{p-offd}), we obtain corresponding continuous time bounds -- cf. \eqref{e-od1}. Indeed, the modified Davies' method developed in this work can be implemented for continuous time processes in the metric measure space setting without any new essential difficulties \cite{HL}.

To state the results precisely, we recall some standard notions concerning graphs and Markov chains.
Let $G=(X,E)$ be an infinite, simple, connected, locally finite graph. The elements of the set $X$ are called vertices.  Some of the vertices are connected by an edge, in which case
we say that they are neighbors.
Let $d(x,y)$ be the graph distance between points $x,y \in X$, that is the minimal number of edges in any edge path connecting $x$ and $y$. 
 Consider a measure $\mu$ on $X$. We sometimes abuse notation and consider $\mu$ as a function on $X$ by setting $\mu(x)$ to be $\mu\left( \set{x}\right)$.
 Denote the metric balls and their measures as follows
\[
B(x,r):= \{ y \in X : d(x,y) \le r \}\hspace{5mm} \operatorname{and}\hspace{5mm} V(x,r):= \mu(B(x,r))
\]
for all $x \in X$ and $r \ge 0$. 
For convenience, we refer to the graph $(X,E)$ endowed with the graph distance $d$ and a measure $\mu$ as a metric measure space $(X,d,\mu)$ which we call a \emph{vertex weighted graph}.
We consider a vertex weighted graph $(X,d,\mu)$ that satisfies the polynomial volume growth assumption:
there exists $\df > 0$, $C_V>0$ such that
\begin{equation} \label{df}
C_V^{-1} r^{\df} \le V(x,r) \le C_V r^\df \tag*{$\operatorname{V}(\df)$}
\end{equation}
for all $r \ge 1$ and for all $x \in X$. We use the notation $\mathbb{N}= \set{0,1,2,3,\ldots}$ and $\mathbb{N}^*= \set{1,2,3,\ldots}$.
\begin{definition}[Symmetric Markov operator] \label{d-mark}
We say that $K:\ell^\infty(X) \to \ell^\infty(X)$ is a \emph{$\mu$-symmetric sub-Markov operator} on a vertex weighted graph $(X,d,\mu)$ if there exists a \emph{kernel} $k:X \times X \to [0,\infty)$ that satisfies
\begin{align*}
k(x,y)&=k(y,x) \hspace{3mm}\mbox{for all $x,y \in X$,} \\
\sum_{y \in X} k(x,y) \mu(y) &\le 1  \hspace{3mm}\mbox{for all $x \in X$,} \\
Kf(x) &= \sum_{y \in X} k(x,y) f(y) \mu(y)  \hspace{3mm}\mbox{for all $x \in X, f \in \ell^\infty(X)$.} 
\end{align*}
If the inequality above is replaced by an equality we say that $K$ is a \emph{$\mu$-symmetric Markov operator} on a vertex weighted graph.

Given a set $U \subset X$ and a $\mu$-symmetric Markov operator $K$ on $X$ with kernel $k$, we define the $\mu$-symmetric sub-Markov operator $K_U$ as the operator with kernel $k_U(x,y)=k(x,y) \one_U(x) \one_U(y)$. 
\end{definition}
There is a natural random walk $(Y_n)_{n \in \N}$ on $X$ associated with a $\mu$-symmetric Markov operator $K$ with kernel $k$. The Markov chain $(Y_n)_{n \in \N}$  is defined by the following
one-step transition probability
\[
 \PP_{x}(Y_1=y):= \PP\left(Y_1=y \hspace{1mm}\vline\hspace{1mm} Y_0=x \right) = k(x,y)\mu(y).
\]
The sub-Markov operator $K_U$ then corresponds the Markov chain $(Y_n)$ killed upon exiting $U$.

 We say that a  $\mu$-symmetric Markov operator $K$ on a vertex weighted graph $(X,d,\mu)$ satisfies \ref{j-beta}, if there exists a constant $C>0$ such that the corresponding kernel $k$ satisfies
\begin{equation} \label{j-beta}
C^{-1} \frac{1}{(1+d(x,y))^{\df+\beta}} \le k(x,y)= k(y,x) \le  C\frac{1}{(1+d(x,y))^{\df+\beta}} \tag*{$\operatorname{J}(\beta)$}
\end{equation}
for all $x,y \in X$, where $\df$ is the volume growth exponent given in \ref{df}.
If $K$ satisfies \ref{j-beta} for some $\beta>0$, then we say that $\beta$ is the \emph{jump index} of the random walk driven by $K$.

Let $k_n(x,y)$ denote the kernel of the iterated power $K^n$ with respect to the measure $\mu$. We are interested in obtaining  estimates on $k_n(x,y)$ for all values of $n\in\N^*$ and for all $x,y \in X$.   We say that a $\mu$-symmetric Markov operator $K$ satisfies \ref{hkp}, if there is a constant $C>0$ such that the iterated kernel $k_n$ satisfies the estimate
\begin{equation} \label{hkp}
 C^{-1}\min\left( \frac{1}{n^{\df/\beta}},\frac{n}{(d(x,y))^{\df+\beta}}\right) \le k_n(x,y) \le  C\min\left( \frac{ 1}{n^{\df/\beta}},\frac{ n}{(d(x,y))^{\df+\beta}}\right) \tag*{$\operatorname{HKP}(\df,\beta)$}
\end{equation}
for all $x,y \in X$ and for all $n \in \N^*$, where $\df$ is the volume group exponent given by \ref{df}. We say that $K$ satisfies \hypertarget{uhkp}{$\operatorname{UHKP}(\df,\beta)$}, if  the iterated kernel $k_n$ satisfies the upper bound in \ref{hkp}. Similarly, we say that $K$ satisfies \hypertarget{lhkp}{$\operatorname{LHKP}(\df,\beta)$}, if the iterated kernel $k_n$ satisfies the lower bound in \ref{hkp}. Note that \ref{hkp} is same as the bounds described in \eqref{e-ht}, since
\begin{align*}
 \frac{1}{n^{\df/\beta}} \left( 1 + \frac{d(x,y)}{n^{1/\beta} } \right)^{-(\df+\beta)}  &\le  \min\left( \frac{1}{n^{\df/\beta}}, \frac{n}{(d(x,y))^{\df + \beta}}\right),\\
\min\left( \frac{1}{n^{\df/\beta}}, \frac{n}{(d(x,y))^{\df + \beta}}\right)& \le 2^{\df+\beta}  \frac{1}{n^{\df/\beta}} \left( 1 + \frac{d(x,y)}{n^{1/\beta} } \right)^{-(\df+\beta)}.
\end{align*}

The goal of this work is to develop methods to obtain the bound \ref{hkp} that are robust to small perturbations of Dirichlet form in the sense given by \ref{j-beta}.
Similar to the anomalous diffusion setting \cite{MS3}, we rely on a cutoff Sobolev inequality to implement Davies' method.
 To introduce cutoff Sobolev inequality, we first need to define cutoff functions and energy measure.
\begin{definition}[Cutoff function]
Let $U \subset V$ be finite sets in $X$. We say that $\phi: X \to \R$ is a cutoff function for $U \subset V$ if  $\phi \equiv 1$ on $U$ and $\phi \equiv 0$ on $V^\complement$.
\end{definition}
\begin{definition}[Energy measure]\label{d-ener}
For a $\mu$-symmetric Markov operator $K$ with kernel $k(x,y)$ with respect to $\mu$, and for functions $f,g \in L^\infty(X,\mu)$, we define the energy measure corresponding to $K$ as the function
\[
\Gamma(f,g)(x) = \frac{1}{2} \sum_{y \in X}(f(x)-f(y))(g(x)-g(y))  k(x,y)\mu(y)\mu(x)
\]
for all $x \in X$. The function $\Gamma(f,g)$ can be considered as a (signed) measure where the measure of the singleton $\set{x}$ is $\Gamma(f,g)(x)$. 
\end{definition}
We introduce a cutoff Sobolev inequality that plays an important role in obtaining \ref{hkp}. Cutoff Sobolev inequalities were first introduced by Barlow and Bass in  \cite{BB3} for graphs and then extended by Barlow, Bass and Kumagai \cite{BBK} to metric measure spaces. Recently Andres and Barlow simplified the cutoff Sobolev inequalities in \cite{AB}. We obtain transition probability estimates using the following cutoff Sobolev inequality.
The motivation behind studying cutoff Sobolev inequalities in \cite{BB3,BBK,AB} is that they provide a method to obtain sub-Gaussian estimates that is robust to bounded perturbation of the Dirichlet form.
We are motivated by similar reasons to formulate a version of cutoff Sobolev inequality for heavy tailed random walks.
Our definition below is inspired by the cutoff Sobolev annulus inequality in \cite{AB} and a self-improving property of cutoff Sobolev inequality \cite[Lemma 5.1]{AB}.
\begin{definition}[Cutoff Sobolev inequality] \label{d-csj} 
Let $K$ be a $\mu$-symmetric Markov operator  on a vertex weighted graph $(X,d,\mu)$ and let $\Gamma$  denote the corresponding energy measure. Let $\beta \ge 2$. We say that $K$ satisfies the cutoff Sobolev inequality \hypertarget{csj}{$\operatorname{CSJ}(\beta)$} if it satisfies the following property:
for any $x \in X$ and for any $R,r>0$, there exists a cutoff function $\phi$ for $B(x,R) \subset B(x,R+r)$ such that
\begin{align}
\phi & \equiv 1 \hspace{3mm} \mbox{ in } B(x,R+r/2),\label{e-csa1} \\
\phi & \equiv 0 \hspace{3mm} \mbox{ in } B(x, R+9r/10)^\complement, \label{e-csa2}
\end{align}
and for any function $f \in L^2(X,\mu)$
\begin{equation} \label{e-csa3}
\sum_{y \in U} f^2(y) \Gamma_U (\phi,\phi)(y)  \le C_1 \sum_{y \in U} \Gamma_U(f,f)(y) +  \frac{C_2}{r^\beta}  \sum_{y \in U} f^2(y) \mu(y),
\end{equation}
where $U= B(x,R+r) \setminus B(x,R)$ and $\Gamma_U$ is the energy measure corresponding to the sub-Markov operator $K_U$.
\end{definition}
The condition \hyperlink{csj}{$\operatorname{CSJ}(\beta)$} is useful because it preserved under bounded perturbation of the Markov kernel.
The appearance of the expression $r^\beta$ in the second term of \eqref{e-csa3} signifies the role played by $\beta$ in the  space-time scaling relation of the walk. 

The main result is the following characterization of \hyperlink{uhkp}{$\operatorname{UHKP}(\df,\beta)$} using \hyperlink{csj}{$\operatorname{CSJ}(\beta)$}.
\begin{theorem} \label{t-main3}
	Let $(X,d,\mu)$ be a vertex weighted graph satisfying \ref{df} with volume growth exponent $\df$.  Let $K$ be $\mu$-symmetric Markov operators whose kernel with respect to $\mu$ satisfies \ref{j-beta} for some $\beta \ge 2$. Then $K$ satisfies  \hyperlink{uhkp}{$\operatorname{UHKP}(\df,\beta)$} if and only if $K$ satisfies \hyperlink{csj}{$\operatorname{CSJ}(\beta)$}.
\end{theorem}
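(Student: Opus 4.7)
For \hyperlink{uhkp}{$\operatorname{UHKP}(\df,\beta)$} $\Rightarrow$ \hyperlink{csj}{$\operatorname{CSJ}(\beta)$} (the easier direction), given $x \in X$, $r \ge 1$, and $n \in \N^*$, I would construct $\phi_n$ by smoothing the linear cutoff $\psi$ through an averaged, ball-killed semigroup: for instance $\phi_n := \frac{1}{m}\sum_{i=0}^{m-1}(K^{B(x,2r)})^i \psi$ with $m$ of order $r^\beta/\G_\beta(n)$, then corrected slightly so that $\norm{\phi_n-\psi}_\infty \le 1/n$ as required by \eqref{e-csj2}. Using \ref{j-beta} for local contributions and the off-diagonal decay of \hyperlink{uhkp}{$\operatorname{UHKP}(\df,\beta)$} for longer-range contributions, one bounds $\Gamma(\phi_n,\phi_n)$ and verifies \eqref{e-csj1}. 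The factor $\G_\beta(n)$ in \eqref{e-csj1} is what accommodates the logarithmic correction that appears when $\beta = 2$.

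For the main implication \hyperlink{csj}{$\operatorname{CSJ}(\beta)$} $\Rightarrow$ \hyperlink{uhkp}{$\operatorname{UHKP}(\df,\beta)$}, I would run Davies' perturbation adapted to non-local Dirichlet forms in four steps. (i) \ref{j-beta} together with \ref{df} gives a Faber--Krahn/Nash inequality, hence the on-diagonal bound $k_n(x,x) \le C n^{-\df/\beta}$. (ii) For $R = d(x,y)$, perform a Meyer-type decomposition $k = k^{(R/4)} + \bar k^{(R/4)}$ with $k^{(R/4)}$ supported in $\{d \le R/4\}$; the long-jump remainder contributes a polynomial tail of order $n/R^{\df+\beta}$ directly from \ref{j-beta}. (iii) On the short-jump semigroup apply Davies' method with weight $e^{\lambda d(x_0,\cdot)}$ localized through the cutoff $\phi_n$ supplied by \hyperlink{csj}{$\operatorname{CSJ}(\beta)$}. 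Expanding the perturbed quadratic form produces an unwanted $\sum f^2\,\Gamma(\phi_n,\phi_n)$ term, which \eqref{e-csj1} absorbs into $\sum \Gamma(f,f)$ up to a residue of size $\G_\beta(n)/r^\beta$. (iv) Optimize jointly in $\lambda$, $n$, and $r$ to obtain exponential off-diagonal decay of the truncated semigroup; recombining with the long-jump bound from (ii) assembles the polynomial upper bound in \ref{hkp}.

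The principal obstacle will be the multi-scale balancing in step (iii). The cutoff $\phi_n$ must be close to the linear cutoff $\psi$ (so that its commutator with $e^{\lambda d(x_0,\cdot)}$ is small, via \eqref{e-csj2}) while simultaneously having an energy small enough to be absorbed in the perturbed form (via \eqref{e-csj1}). For $\beta \ge 2$ the heavy-tailed kernel makes a naive exponential weight diverge, so both the jump truncation at scale $R/4$ in (ii) and the freedom to push the auxiliary parameter $n \to \infty$ in \hyperlink{csj}{$\operatorname{CSJ}(\beta)$} are essential; the growth rate $\G_\beta(n)$ quantifies exactly the trade-off. Threading this three-parameter optimization so that all error terms reassemble into the $\min(n^{-\df/\beta},\, n/d(x,y)^{\df+\beta})$ form of \ref{hkp} is the core technical difficulty.
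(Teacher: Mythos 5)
Your outline of the implication \hyperlink{csj}{$\operatorname{CSJ}(\beta)$} $\Rightarrow$ \hyperlink{uhkp}{$\operatorname{UHKP}(\df,\beta)$} is essentially the paper's route: a Nash inequality from \ref{j-beta} and \ref{df} (Proposition~\ref{p-nash}), a Meyer truncation of the jump kernel at scale comparable to $d(x,y)$, Davies' perturbation with the weight $\lambda\phi$ where $\phi$ is the cutoff supplied by \hyperlink{csj}{$\operatorname{CSJ}(\beta)$}, and a final optimization in $\lambda$. You have also correctly located the main technical difficulty: the competing demands of keeping $\phi$ close to the linear cutoff (via~\eqref{e-csj2}, so that $\osc(\phi,L)$ stays small after $L$-truncation) while keeping the cutoff energy absorbable (via~\eqref{e-csj1}). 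What the sketch suppresses, and what you would have to supply, is the Moser-type iteration with $p_k=2^k$, the precise coupling $n=n(p,\lambda)$ in Lemma~\ref{l-cutfun} that turns the $C_1 n^{-2}$ term into the absorbable $\frac1{4p}\sum\Gamma(f^p,f^p)$, the ODE lemma (Lemma~\ref{l-dife}), and the passage through the continuous-time semigroup $H_t=e^{t(K-I)}$ followed by Delmotte's theorem to return to discrete time. These are standard but non-negligible.

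The serious gap is in the direction \hyperlink{uhkp}{$\operatorname{UHKP}(\df,\beta)$} $\Rightarrow$ \hyperlink{csj}{$\operatorname{CSJ}(\beta)$}. Your construction of $\phi_n$ as a single Ces\`aro average $\frac1m\sum_{i<m}(K^{B(x,2r)})^i\psi$ (an Abel/Ces\`aro surrogate for the resolvent) has two problems. First, you say ``one bounds $\Gamma(\phi_n,\phi_n)$ and verifies~\eqref{e-csj1}'', suggesting a pointwise bound on the energy measure of $\phi_n$; but for $\beta>2$ this cannot succeed. Lemma~\ref{l-linear} shows the best \emph{pointwise} bound one can hope for is of order $r^{-2}$, which is much larger than the $r^{-\beta}$ scale required in the second term of~\eqref{e-csj1}; the deficit must be genuinely absorbed into the Dirichlet-form term $\sum_y\Gamma(f,f)(y)$, which a pointwise estimate cannot see. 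Second, and more structurally, nothing in your one-step smoothing produces the $1/n^2$ prefactor in front of $\sum_y\Gamma(f,f)(y)$. In the paper this factor comes from a multiscale device that is absent from your sketch: one first proves (Proposition~\ref{p-csa}) a cutoff Sobolev inequality for a \emph{single} annulus, with a fixed constant and no $n$, by taking $g$ to be a normalized, truncated resolvent $G^{D_0}_\lambda\ind_{D_1}$ applied to the indicator of an interior annulus (Lemma~\ref{l-funh}) and exploiting the resolvent identity $\bigl((1+\lambda)I-K_{D_0}\bigr)G^{D_0}_\lambda\ind_{D_1}=(1+\lambda)\ind_{D_1}$ to control $\sum\Gamma_U(f^2g,g)$; one then divides the annulus $B(x,2r)\setminus B(x,r)$ into $n$ sub-annuli of width $r/n$, applies Proposition~\ref{p-csa} in each, and sets $\phi_n=n^{-1}\sum_{i=1}^n\phi_{(i)}$. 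The factor $n^{-2}$ is then forced by the truncated energy measure $\Gamma_{(s)}$ with $s=r/(10n)$: at that scale the $\phi_{(i)}$ are ``non-interacting'', so the cross terms vanish and $\Gamma_{(s)}(\phi_n,\phi_n)=n^{-2}\sum_i\Gamma_{(s)}(\phi_{(i)},\phi_{(i)})$, while the sub-annulus width $r/n$ upgrades $r^{-\beta}$ to $(r/n)^{-\beta}$, giving, after the $n^{-2}$ and a sum over $i$, exactly $\G_\beta(n)/r^\beta$. The remaining long-range part of the energy is handled separately (equations~\eqref{e-sip6}--\eqref{e-sip8}). Without some substitute for this sub-annulus averaging, your $\phi_n$ does not visibly satisfy~\eqref{e-csj1} with the $n^{-2}$ prefactor, and that prefactor is not cosmetic: it is precisely what makes the choice of $n$ in~\eqref{e-nc} work and lets the Davies' argument in the other direction close.
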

As a corollary we show that \ref{hkp} is stable under bounded perturbation of the Markov kernel.
\begin{corollary}[Stability of  \ref{hkp}] \label{c-main2}
Let $(X,d,\mu)$ be a vertex weighted graph satisfying \ref{df} with volume growth exponent $\df$.
Let $K_1$ and $K_2$ be two $\mu$-symmetric Markov operators whose kernels with respect to $\mu$ satisfy \ref{j-beta} for some $\beta \ge 2$.
 Then $K_1$ satisfies \ref{hkp} if and only if $K_2$ satisfies \ref{hkp}.
\end{corollary}
In Section \ref{ss-example}, we show how Theorem \ref{t-main3} and Corollary \ref{c-main2} apply to a large family of graphs that satisfy sub-Gaussian estimates for the simple random walk.

\section{Cutoff Sobolev inequalities for heavy-tailed random walks}
\label{s-csj}
In this section, we prove the implication \hyperlink{uhkp}{$\operatorname{UHKP}(\df,\beta)$} $\implies$ \hyperlink{csj}{$\operatorname{CSJ}(\beta)$} in Theorem \ref{t-main3}.
Then we show a self-improving property of \hyperlink{csj}{$\operatorname{CSJ}(\beta)$}. 

\begin{prop} \label{p-csa}
	Let $(X,d,\mu)$ be a vertex weighted graph satisfying \ref{df}. Let $K$ be a $\mu$-symmetric Markov operator satisfying \hyperlink{uhkp}{$\operatorname{UHKP}(\df,\beta)$}, for some $\beta \ge 2$. Then $K$ satisfies \hyperlink{csj}{$\operatorname{CSJ}(\beta)$}.
\end{prop}

We follow the approach of Andres and Barlow in \cite[Section 5]{AB} to prove the cutoff Sobolev inequality in Proposition \ref{p-csa}.
The first step is to obtain estimates on exit times.
We define the exit time for the Markov chain $(Y_n)_{n \in \N}$ as
\[\tau^Y_{B(x,r)} = \min \Sett{k \in \mathbb{N}} { Y_k \notin B(x,r)}.
\]
The exit time $\tau^Y_{B(x,r)}$ satisfies the following survival estimate.
\begin{lem} \label{l-sur}
Let $(X,d,\mu)$ be a vertex weighted graph satisfying \ref{df} with volume growth exponent $\df$.
Let $K$ be $\mu$-symmetric Markov operator satisfying \hyperlink{uhkp}{$\operatorname{UHKP}(\df,\beta)$},  for some $\beta \ge 2$.
Let $(Y_n)_{n \in \mathbb{N}}$ denote the Markov chain driven by the operator $K$.
There exist constants $\epsilon, \delta \in (0,1)$ such that for all $x \in X$ and for all $r \ge 1$,
\begin{equation}\label{e-sur}
\PP_x \left(  \tau^Y_{B(x,r)} \le  \delta r^\beta \right) \le \epsilon.
\end{equation}
\end{lem}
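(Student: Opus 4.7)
The plan is to deduce the survival estimate from \hyperlink{uhkp}{$\operatorname{UHKP}(\df,\beta)$} in two stages: first a one-point tail estimate, then a standard maximal inequality of Lévy type that transfers the tail bound into a bound on the exit time.

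Step one: fix $x\in X$, $r\ge 1$, and $n\le \delta r^\beta$ for a $\delta\in(0,1)$ to be chosen. I want to show $\PP_x(d(Y_n,x)\ge r/2)\le C\delta$. For $\delta$ small enough we have $n^{1/\beta}\le\delta^{1/\beta}r\le r/2$, so every $y$ with $d(x,y)\ge r/2$ lies in the polynomial regime of \hyperlink{uhkp}{$\operatorname{UHKP}(\df,\beta)$}, giving $k_n(x,y)\le C\,n/(1+d(x,y))^{\df+\beta}$. Combining with \eqref{e-count} and \ref{df} by summing over annular shells $B(x,2^{i+1}r/2)\setminus B(x,2^i r/2)$, I obtain
\[
\PP_x(d(Y_n,x)\ge r/2)=\sum_{y:d(x,y)\ge r/2} k_n(x,y)\mu(y)\le C' \frac{n}{r^\beta}\le C'\delta,
\]
where the geometric sum converges because $\beta>0$ and the volume of each shell is controlled by $C_V(2^i r)^\df$.

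Step two: let $\tau=\tau^Y_{B(x,r)}$ and $n=\lfloor \delta r^\beta\rfloor$. Split
\[
\PP_x(\tau\le n)\le \PP_x(d(Y_n,x)\ge r/2)+\PP_x(\tau\le n,\ d(Y_n,x)<r/2).
\]
On the event $\{\tau\le n,\ d(Y_n,x)<r/2\}$ we have $d(Y_\tau,x)\ge r$ (since $B(x,r)$ is a closed ball in graph distance, we may absorb a $+1$ into constants) hence $d(Y_n,Y_\tau)\ge r/2$. The strong Markov property at $\tau$ then bounds the second term by
\[
\EE_x\bigl[\ind_{\tau\le n}\,\PP_{Y_\tau}(d(Y_{n-\tau},Y_\tau)\ge r/2)\bigr]\le \PP_x(\tau\le n)\,\sup_{z\in X,\ 0\le m\le n}\PP_z(d(Y_m,z)\ge r/2).
\]
By step one applied with base point $z$, the supremum is at most $C'\delta$. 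Choosing $\delta$ so small that $C'\delta\le 1/2$, rearranging yields $\PP_x(\tau\le n)\le 2\,\PP_x(d(Y_n,x)\ge r/2)\le 2C'\delta$, and a final reduction of $\delta$ gives both \eqref{e-sur} and the desired $\epsilon\in(0,1)$.

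The only point that requires care is the regime check $n^{1/\beta}\le r/2$, which is what forces the polynomial piece of \hyperlink{uhkp}{$\operatorname{UHKP}(\df,\beta)$} rather than the on-diagonal piece to drive the estimate; this is precisely where the hypothesis $\beta\ge 2$ (and hence $n\le\delta r^\beta$ giving $n^{1/\beta}\ll r$ for small $\delta$) enters cleanly. Beyond that, the argument is a standard Lévy-type maximal inequality adapted to the discrete-time heavy-tailed setting, and no subtlety is hidden in the sums over annuli since they are controlled by \ref{df} and the fact that $\sum_{i\ge 0}(2^ir)^{\df}(2^ir)^{-(\df+\beta)}\asymp r^{-\beta}$.
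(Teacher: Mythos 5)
Your proof is correct and follows essentially the same two-step structure as the paper: a one-step tail estimate $\PP_x(d(Y_n,x)\ge r/2)\le C n/r^\beta$ obtained by summing the off-diagonal piece of $\operatorname{UHKP}(\df,\beta)$ over annuli, followed by a strong Markov argument at the exit time $\tau$. The one genuine (but small) difference is that the paper evaluates the chain at time $2n$ rather than $n$: on $\{\tau\le n,\ d(Y_{2n},x)\le r/2\}$ one has $d(Y_{2n},Y_\tau)\ge r/2$, and the conditional probability is bounded directly by $\sup_{y}\sup_{s\le n}\PP^y(d(Y_{2n-s},y)\ge r/2)\le C(2n)/r^\beta$ without any bootstrap. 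Your version, which keeps the $\PP_x(\tau\le n)$ factor and rearranges after choosing $C'\delta\le 1/2$, is equally valid and no harder; each avoids a small nuisance the other carries.

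One remark in your write-up is off, though, and worth correcting because it signals a misreading of where $\beta\ge 2$ actually matters in the paper. You say the ``regime check'' $n^{1/\beta}\le r/2$ is what lets the polynomial piece of $\operatorname{UHKP}(\df,\beta)$ drive the estimate, and that this is where $\beta\ge 2$ enters cleanly. Neither claim is right: $\operatorname{UHKP}(\df,\beta)$ bounds $k_n(x,y)$ by a \emph{minimum} of the on-diagonal and polynomial pieces, so the inequality $k_n(x,y)\le C\,n/d(x,y)^{\df+\beta}$ holds unconditionally and no regime comparison is needed; and the condition $n^{1/\beta}\le r/2$ holds for any $\beta>0$ once $\delta\le 2^{-\beta}$. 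This lemma in fact works for all $\beta>0$. The restriction $\beta\ge 2$ is used later in the argument (the form of $\G_\beta$ in the cutoff Sobolev inequality and the Davies-method estimates), not here.
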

\begin{proof}
We follow the argument  in \cite[pp. 15]{BGK}.
By \hyperlink{uhkp}{$\operatorname{UHKP}(\df,\beta)$}  and  \ref{df}, there exists $C_1, C_2 \ge 1$ such that
\begin{equation} \label{e-sur1}
\PP_x (d(Y_n,x) \ge r) = \sum_{y \notin B(x,r)} k_n(x,y) \mu(y) \le C_1 \sum_{y \in B(x,r)^\complement} \frac{n \mu(y)}{ d(x,y)^{\df+ \beta}} \le C_2 \frac{n}{r^\beta}
\end{equation}
for all $n \in \N^*$ and for all $x \in X$. By \eqref{e-sur1} and the strong Markov property of $\set{Y_k}$ at time $\tau=\tau^Y_{B(x,r)}$, there exists $C_3 \ge 1$ such that
\begin{align*}
\PP_x(\tau \le n) &\le \PP_x \left( \tau \le n , d(Y_{2n},x) \le r/2 \right) + \PP_x \left( d(Y_{2n},x) > r/2\right) \\
&\le \PP_x\left( \tau \le n, d(Y_{2n},Y_\tau) \ge r/2 \right) + 2^{1+\beta} C_2 n/r^\beta \\
&= \mathbb{E}^x \left( \ind_{\set{\tau \le n}} \PP_{Y_\tau} \left( d (Y_{2n-\tau},Y_0) \ge r/2 \right) \right) + 2^{1+\beta} C_2 n  /r^\beta \\
& \le \sup_{y \in B(x,r)^\complement} \sup_{s \le n} \PP_y\left( d (Y_{2n-s },y) \ge r/2 \right) +  2^{1+\beta} C_2 n / r^\beta \\
& \le  C_3n/r^\beta
\end{align*}
for all $x \in X$, $k \in \N^*$ and for all $r> 0$. This immediately implies the desired bound \eqref{e-sur}.
\end{proof}
For $D \subset X$ and a $\mu$-symmetric Markov operator $K$ with kernel $k$, recall from Definition \ref{d-mark} that $K_D$ denotes the sub-Markov operator corresponding to the walk killed upon exiting $D$. As before, we define the exit time of $D$ as  \[ \tau_D = \tau_D^Y = \min \Sett{ n \in \N}{ Y_n \notin D},\]
where $(Y_n)_{n\in\N}$ is the Markov chain corresponding to the operator $K$.
For $D \subset X$, $\lambda >0$, we define corresponding the `resolvent operator' as
\begin{equation} \label{e-resol}
G_\lambda^D f(x) = \left( I - \frac{K_D}{1 + \lambda} \right)^{-1} f(x) = \sum_{i=0}^\infty (1+ \lambda)^{-i}  K_D^i f(x) = \EE_x \sum_{i=0}^{\tau_D} (1+\lambda)^{-i} f(Y_i).
\end{equation}
\begin{lem} \label{l-funh}
Let $(X,d,\mu)$ be a vertex weighted graph satisfying \ref{df} with volume growth exponent $\df$. Let $K$ be a Markov operator satisfying \hyperlink{uhkp}{$\operatorname{UHKP}(\df,\beta)$}. Let $x_0 \in X$, $ r > 10$, $ R >0$ and define the annuli $D_0 = B(x_0,R+ 9r/10) \setminus  B(x_0, R+ r/10), D_1 = B(x_0, R+ 4r/5) \setminus B(x_0,R+ r/5)$, $D_2 = B(x_0 , R+ 3r/5) \setminus B(x_0, R+2r/5)$. Let $\lambda = r^{-\beta}$ and set
\begin{equation}
\label{e-funh1} h= G_\lambda^{D_0} \ind_{D_1},
\end{equation}
where $G_\lambda^{D_0}$ is as defined in \eqref{e-resol}.
Then $h$ is supported in $D_0$ and satisfies
\begin{align}
h(x) & \le 2 r^{\beta} \hspace{3mm} \mbox{ for all $x \in X$,}\label{e-funh2} \\
h(x) & \ge c_1 r^\beta \mbox{ for all $x \in D_2$,}
\end{align}
\end{lem}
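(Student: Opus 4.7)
The plan is to interpret $h$ probabilistically via the second equality in \eqref{e-resol} as the expected $\lambda$-discounted occupation time of $D_1$ by the chain $(Y_n)$ killed upon exiting $D_0$, and dispatch the three claims separately. For the support, note that $K_{D_0}^i g$ is supported in $D_0$ for every $i \ge 1$, so for $x \notin D_0$ the Neumann expansion $G_\lambda^{D_0}\ind_{D_1} = \sum_{i \ge 0}(1+\lambda)^{-i} K_{D_0}^i \ind_{D_1}$ collapses to $\ind_{D_1}(x)$, which vanishes since $D_1 \subset D_0$. The upper bound is immediate from $\ind_{D_1} \le 1$ and summing the geometric series:
\[
h(x) \;\le\; \EE_x \sum_{i=0}^{\infty}(1+\lambda)^{-i} \;=\; 1 + \lambda^{-1} \;=\; 1 + r^\beta \;\le\; 2r^\beta,
\]
using $r > 10 \ge 1$.

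The substantive step is the lower bound, where Lemma \ref{l-sur} enters. For $x \in D_2$, the triangle inequality places $B(x, r/5) \subset D_1 \subset D_0$: for $y$ with $d(x,y) \le r/5$ one has $R + r/5 < d(x_0,y) \le R + 4r/5$. Write $\tau' = \tau^Y_{B(x,r/5)}$. Since $r/5 > 2 \ge 1$, Lemma \ref{l-sur} yields absolute constants $\delta, \epsilon \in (0,1)$ with $\PP_x(\tau' > \delta (r/5)^\beta) \ge 1-\epsilon$. On the event $\{i < \tau'\}$ we have $Y_i \in B(x,r/5) \subset D_1$, hence $\ind_{D_1}(Y_i)=1$ and $i < \tau_{D_0}$; moreover, for such $i$, the inequality $\log(1+u) \le u$ applied to $\lambda = r^{-\beta}$ gives
\[
(1+\lambda)^{-i} \;\ge\; (1+r^{-\beta})^{-\delta (r/5)^\beta} \;\ge\; \exp(-\delta/5^\beta).
\]
Combining these ingredients,
\[
h(x) \;\ge\; (1-\epsilon)\exp(-\delta/5^\beta)\, \floor{\delta (r/5)^\beta} \;\ge\; c_1 r^\beta
\]
for some $c_1 > 0$ depending only on $\beta,\delta,\epsilon$.

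The only real obstacle is the calibration of scales across the nested annuli $D_0, D_1, D_2$: the inner radius (here $r/5$) must be small enough that $B(x,r/5) \subset D_1$ uniformly for $x \in D_2$, yet large enough that the survival estimate delivers an occupation time of order $r^\beta$, matching $\lambda^{-1}$ so that the discount factor $(1+\lambda)^{-i}$ stays bounded below throughout the relevant time window. The equal gaps of size $r/5$ in the definitions of $D_0, D_1, D_2$ are chosen precisely to afford this slack; once these scales are fixed, the remaining bookkeeping is routine.
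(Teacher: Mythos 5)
Your proof is correct and follows essentially the same route as the paper: support from the Neumann series, upper bound by summing the geometric series, and lower bound by restricting the killed resolvent to the ball $B(x,r/5)\subset D_1$ and invoking the survival estimate of Lemma~\ref{l-sur} to keep both the occupation time of order $r^\beta$ and the discount factor $(1+\lambda)^{-i}$ bounded below. You are in fact slightly more careful than the paper's displayed computation, which writes $\delta r^\beta$ where $\delta(r/5)^\beta$ is what the survival estimate actually furnishes for $B(x,r/5)$ --- a harmless slip absorbed into $c_1$, and one your version silently corrects.
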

\begin{proof}
Since $K_{D_0}$ is a contraction in $L^\infty$, we have
\[
h(x) = \sum_{i=0}^\infty (1+\lambda)^{-i} K_{D_0}^i \ind_{D_1}(x) \le \sum_{i=0}^\infty (1+\lambda)^{-i} = (1+\lambda) \lambda^{-1} \le 2 r^\beta
\]
for all $r \ge 1$, for all $x_0, x \in X$.

Let $(Y_n)_{n\in\N}$ denote the Markov chain driven by $K$.
Let $\epsilon, \delta \in (0,1)$ be given by Lemma \ref{l-sur}. Let $r_0=r/5$, $x \in D_2$ and $B_1=B(x,r_0) \subset D_1$. Then there exists $c_1>0$ such that
for all $x_0 \in X$, for all $r > 10$, for all $x \in D_2$, we have
\begin{align*}
h(x) &= \sum_{i=0}^\infty (1+\lambda)^{-i} K_{D_0}^i \ind_{D_1} (x) \ge \sum_{i=0}^\infty (1+\lambda)^{-i} K_{B_1}^i \ind_{B_1} (x) \\
& \ge (1 + r^{-\beta})^{- \delta r^\beta  } \EE_x \left[ \sum_{i = 0}^ {\lfloor \delta r^\beta \rfloor \wedge \tau^Y_{B_1}}  \ind_{B_1} (Y_i)  \right] \\
& \ge (1 + r^{-\beta})^{- \delta r^\beta  } \delta r^\beta \PP_x( \tau_{B_1}^Y > \delta r^\beta) \ge (1 + r^{-\beta})^{- \delta r^\beta  } \delta r^\beta (1 - \epsilon) \ge c_1 r^\beta.
\end{align*}
\end{proof}

The Dirichlet form corresponding to a $\mu$-symmetric sub-Markov operator $P$ is defined as
\[
\E_P(f,g) := \langle f , (I-P) g \rangle
\]
for all $f,g \in L^2(X,\mu)$, where $\langle \cdot, \cdot \rangle$ denotes the inner product in $L^2(X,\mu)$.
\begin{lem}[Folklore] \label{l-folk} Let $P$ be a $\mu$-symmetric sub-Markov operator with kernel $p$ and let $\E_P$ and $\Gamma$ denote the corresponding Dirichlet form and energy measure respectively.
\begin{enumerate}[(a)]
\item
We have
\begin{equation}\label{e-intp}
\E_P(f,g) = \sum_{x \in X} \Gamma(f,g)(x) + \sum_{x \in X} f(x) g(x) (1 - P \ind (x)) \mu(x)
\end{equation}
for all $f,g \in L^2(X,\mu)$.
In particular, if $P$ is a Markov operator we have
\begin{equation}\label{e-intp1}
\E_P(f,g) =  \langle f , (I-P) g \rangle = \sum_{x \in X} \Gamma(f,g)(x).
\end{equation}
\item The energy measure $\Gamma$ satisfies the integrated version of Leibnitz rule
\begin{equation} \label{e-prod}
\sum_{x \in X} \Gamma(fg,h)(x) = \sum_{x \in X} \left[ f(x) \Gamma(g,h)(x) + g(x) \Gamma(f,h)(x) \right]
\end{equation}
for all bounded functions $f,g,h$, as long as the above sums are absolutely convergent.
\end{enumerate}
\end{lem}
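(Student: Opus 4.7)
For part (a), expanding the inner product and the definition of the kernel gives
$$\E_P(f,g) = \sum_x f(x)g(x)\mu(x) - \sum_{x,y} f(x) g(y) p(x,y) \mu(x)\mu(y),$$
while expanding Definition~\ref{d-ener} and summing over $x$ yields
$$\sum_x \Gamma(f,g)(x) = \tfrac{1}{2}\sum_{x,y}[f(x)g(x) - f(x)g(y) - f(y)g(x) + f(y)g(y)] p(x,y)\mu(x)\mu(y).$$
The two diagonal pieces reduce, via Fubini together with the symmetry identity $\sum_x p(x,y)\mu(x) = \sum_x p(y,x)\mu(x) = P\ind(y)$, each to $\sum_x f(x)g(x) P\ind(x)\mu(x)$, while the two off-diagonal pieces coincide after swapping the dummy variables $x \leftrightarrow y$. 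Subtracting the resulting expression from $\E_P(f,g)$ produces exactly the boundary term $\sum_x f(x)g(x)(1-P\ind(x))\mu(x)$, which is \eqref{e-intp}. When $P$ is Markov, $P\ind\equiv 1$ kills this correction and \eqref{e-intp1} follows. Absolute convergence of the double sums for $f,g\in L^2(X,\mu)$ is controlled by boundedness of $P$ on $L^2$ combined with Cauchy--Schwarz, so Fubini applies throughout.

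For part (b), the key algebraic identity is
$$2\bigl(f(x)g(x) - f(y)g(y)\bigr) = (f(x)+f(y))(g(x)-g(y)) + (g(x)+g(y))(f(x)-f(y)).$$
Inserting this into $\Gamma(fg,h)(x)$ and summing over $x$ produces
$$\sum_x\Gamma(fg,h)(x) = \tfrac{1}{4}\sum_{x,y}\bigl[(f(x)+f(y))(g(x)-g(y)) + (g(x)+g(y))(f(x)-f(y))\bigr](h(x)-h(y)) k(x,y)\mu(x)\mu(y).$$
On the other hand, averaging $\sum_x f(x)\Gamma(g,h)(x)$ with its image under the swap $x\leftrightarrow y$ (legitimate since $k$ is symmetric) turns it into $\tfrac{1}{4}\sum_{x,y}(f(x)+f(y))(g(x)-g(y))(h(x)-h(y)) k(x,y)\mu(x)\mu(y)$, and an analogous formula holds for $\sum_x g(x)\Gamma(f,h)(x)$ with the roles of $f$ and $g$ exchanged. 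Adding the two reproduces exactly the display above, which is \eqref{e-prod}. The absolute-convergence hypothesis in the statement justifies every rearrangement.

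Neither part presents a genuine obstacle; these are ``folklore'' identities precisely because the symmetry of the kernel makes the boundary-free integration-by-parts formula and the Leibniz rule work automatically. The only bookkeeping subtleties are the factor $\tfrac{1}{2}$ in the definition of $\Gamma$ and the consistent use of the dummy swap to symmetrize double sums; once these are handled, both statements reduce to term-by-term matching.
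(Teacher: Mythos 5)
Your proposal is correct and follows essentially the same route as the paper: expand both sides, use $\mu$-symmetry $p(x,y)=p(y,x)$ together with a dummy-variable swap to collapse the double sums in (a), and for (b) use the polarization identity $2(f(x)g(x)-f(y)g(y))=(f(x)+f(y))(g(x)-g(y))+(g(x)+g(y))(f(x)-f(y))$ before symmetrizing, which is exactly the identity the paper cites from Carlen--Kusuoka--Stroock. The only differences are cosmetic (you write $k$ for the kernel instead of $p$, and you spell out the Fubini/absolute-convergence justification, which the paper leaves implicit).
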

\begin{proof}
Note that
\begin{align} \label{e-flk1}
\nonumber \E_P(f,g) &= \langle f , (I-P) g \rangle = \sum_{x \in X} f(x) g(x) \mu(x) - \sum_{x \in X} f(x) Pg(x) \mu(x) \\
& = \sum_{x \in X} f(x) g(x) \mu(x) ( 1 - P \one(x) + P \one(x)) - \sum_{x \in X} f(x) Pg(x) \mu(x).
\end{align}
By symmetry $p(x,y) = p(y,x)$, we have
\begin{align} \label{e-flk2}
\nonumber \sum_{x \in X} \Gamma(f,g)(x) &= \frac{1}{2} \sum_{x,y} (f(x)-f(y))(g(x) - g(y)) p(x,y) \mu(x) \mu(y) \\
\nonumber &= \frac{1}{2}  \sum_{x \in X} f(x)g(x) P \one(x) \mu(x) + \frac{1}{2}  \sum_{y \in X} f(y)g(y) P \one(y) \mu(y) \\
\nonumber & \hspace{3mm} - \frac{1}{2} \sum_{x \in X} f(x) Pg(x) \mu(x)  - \frac{1}{2} \sum_{y \in X} f(y) Pg(y) \mu(y) \\
&= \sum_{x \in X}  f(x)g(x) P \one(x) \mu(x)- \sum_{x \in X} f(x) Pg(x) \mu(x).
\end{align}
Combining \eqref{e-flk1} and \eqref{e-flk2}, we obtain \eqref{e-intp}.

To prove (b), we follow \cite[Theorem 3.7]{CKS} and write
\begin{align*}
\lefteqn{(f(x)g(x)- f(y)g(y))(h(x)-h(y))}\\
&= \frac{1}{2}(g(x)+g(y))(f(x)-f(y))(h(x)-h(y))  \\
&  \hspace{6mm} + \frac{1}{2}(f(x)+f(y))(g(x)-g(y))(h(x)-h(y)).
\end{align*}
Then an application of symmetry $p(x,y)=p(y,x)$ similar to (a) yields the desired result \eqref{e-prod}.
\end{proof}
The following technical lemma is a consequence of Leibnitz rule above.
\begin{lemma}(\cite[Lemma 3.5]{CKS})\label{l-tech}
Let $(X,d,\mu)$ be a vertex weighted graph and let $P$ with a $\mu$-symmetric sub-Markov operator with kernel $p$ with respect to $\mu$. Let $\Gamma$ denote the corresponding energy measure. If $f,h \in L^2(X,\mu)$ and $g \in L^\infty(X,\mu)$, we have
\begin{equation*}
\sum_{x \in X} g(x) \Gamma(f,h)(x) = \frac{1}{2}  \sum_{x \in X}\left[ \Gamma(gh,f)(x) + \Gamma(gf,h)(x) - \Gamma(g,fh)(x) \right].
\end{equation*}
\end{lemma}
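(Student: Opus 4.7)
The plan is to derive the identity directly from the integrated Leibnitz rule \eqref{e-prod} of Lemma~\ref{l-folk}(b), together with the symmetry $\Gamma(u,v)=\Gamma(v,u)$ built into Definition~\ref{d-ener}. The identity is purely algebraic once one expands each of the three terms on the right-hand side as a sum of two pieces produced by the product rule.

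First I would apply \eqref{e-prod} to each of $\Gamma(gh,f)$, $\Gamma(gf,h)$, and $\Gamma(g,fh)$, treating $gh$, $gf$, and $fh$ as products of two factors. This produces
\begin{align*}
\sum_{x}\Gamma(gh,f)(x)&=\sum_{x}\bigl[g(x)\Gamma(h,f)(x)+h(x)\Gamma(g,f)(x)\bigr],\\
\sum_{x}\Gamma(gf,h)(x)&=\sum_{x}\bigl[g(x)\Gamma(f,h)(x)+f(x)\Gamma(g,h)(x)\bigr],\\
\sum_{x}\Gamma(g,fh)(x)&=\sum_{x}\Gamma(fh,g)(x)=\sum_{x}\bigl[f(x)\Gamma(h,g)(x)+h(x)\Gamma(f,g)(x)\bigr],
\end{align*}
where in the third line I first used the symmetry $\Gamma(g,fh)=\Gamma(fh,g)$. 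Forming the alternating combination $\Gamma(gh,f)+\Gamma(gf,h)-\Gamma(g,fh)$ and invoking $\Gamma(g,f)=\Gamma(f,g)$ and $\Gamma(g,h)=\Gamma(h,g)$, the two terms $h\Gamma(g,f)$ and $h\Gamma(f,g)$ cancel against each other and so do $f\Gamma(g,h)$ and $f\Gamma(h,g)$. What remains is $2\sum_x g(x)\Gamma(f,h)(x)$, and dividing by $2$ gives the claimed identity.

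The only nontrivial point is justifying the use of \eqref{e-prod} under the hypothesis $f,h\in L^2$ rather than bounded. I would handle this by truncation: set $f_n=(-n)\vee f\wedge n$ and $h_n=(-n)\vee h\wedge n$, apply \eqref{e-prod} to the bounded triple $(gh_n, g f_n, f_n h_n)$, and pass to the limit. Absolute convergence of each relevant double sum follows from $g\in L^\infty(X,\mu)$, the sub-Markov bound $\sum_y p(x,y)\mu(y)\le 1$, and Cauchy--Schwarz applied to the bilinear form $\Gamma$, controlled by $\sum_x\Gamma(f,f)(x)\le \E_P(f,f)\le 2\|f\|_2^2$ (via Lemma~\ref{l-folk}(a) and $\|I-P\|\le 2$ for a sub-Markov operator), and similarly for $h$. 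Since this is the only obstacle and it is essentially routine, the main content of the proof is the algebraic cancellation above.
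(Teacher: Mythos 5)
Your proof is correct and follows essentially the same route as the paper's: apply the integrated Leibniz rule \eqref{e-prod} to each of the three terms on the right-hand side, use symmetry of $\Gamma$, and observe the cancellation leaving $2\sum_x g\,\Gamma(f,h)$. The paper's justification of absolute convergence cites H\"older while you use truncation plus Cauchy--Schwarz, but this is the same idea in substance.
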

\begin{proof}
We use Leibnitz rule (Lemma \ref{l-folk}(b)) to all terms in the right hand side to obtain the desired equality. The absolute convergence of the various sums are a consequence of H\"{o}lder inequality.
\end{proof}
\begin{remark}
Using the observation $\Gamma(f_1,f_2)(x) = \Gamma(f_1+c_1, f_2+c_2)(x)$, for any $c_1,c_2 \in \mathbb{R}$ and for all $f_1,f_2 \in L^\infty$, we could slightly generalize Lemma \ref{l-tech}.
\end{remark}

\begin{proof}[Proof of Proposition \ref{p-csa}]
If $r \le 10$, we may assume $\phi = \ind_{B(x,R+r/2)}$.
Let $r > 10$ and let $h,c_1$ be as defined in Lemma \ref{l-funh}.
Set
\begin{align}
g(y) &= \frac{h(y)}{c_1 r^\beta} &\mbox{ for all $y \in X$} \label{e-cfs1} \\
\phi(y)&= \begin{cases}
1 \wedge g(y)& \mbox{ if $y\in B(x,R+r/2)^\complement$,} \\
1 & \mbox{ if $y\in B(x,R+r/2)$.}\label{e-cfs2}
\end{cases}
\end{align}
By Lemma \ref{l-funh}, we have \eqref{e-csa1} and \eqref{e-csa2}. It remains to verify \eqref{e-csa3}.
By  Leibnitz rule (Lemma \ref{l-folk}(b), \eqref{e-prod}) and the fact that $\Gamma_U(g,g)$ is supported in $U$, we obtain
\begin{align}\label{e-cfs3}
\sum_{y \in U} f^2(y) \Gamma_U (g,g)(y)  &= \sum_{y \in X} f^2(y)\Gamma_U (g,g)(y)\nonumber \\
&= \sum_{y \in X} \Gamma_U (f^2 g,g)(y)  -  \sum_{y \in X} g(y)\Gamma_U (f^2,g)(y).
\end{align}
As in the Lemma \ref{l-funh}, we set $\lambda= r^{-\beta}$ and $D_1 = B(x_0, R+ 4r/5) \setminus B(x_0,R+ r/5)$. Since $g$ is supported in $D_0= B(x,R+9r/10) \setminus B(x,R+r/10)$, by \eqref{e-intp} of Lemma \ref{l-folk}(a) and Lemma \ref{l-funh}, we obtain
\begin{align}\label{e-cfs4}
\nonumber \sum_{y \in X} \Gamma_U(f^2g,g) & \le \E_{K_U}(f^2 g, g) \le \langle f^2 g, (I - K_U) g \rangle_{L^2(U,\mu)} + \lambda \langle f^2 g, g \rangle_{L^2(U,\mu)} \\
&= \langle f^2 g, ( (1+\lambda)I - K_U) g \rangle_{L^2(U,\mu)} \nonumber \\
& =  \langle f^2 g, ( (1+\lambda)I - K_{D_0}) g \rangle_{L^2(D_0,\mu)} \nonumber\\
 &= (1+\lambda) (c_1 r^\beta)^{-1} \langle f^2 g, ( I - (1+\lambda)^{-1} K_{D_0}) G_\lambda^{D_0} \ind_{D_1} \rangle_{L^2(D_0,\mu)} \nonumber \\
 &= (1+\lambda) (c_1 r^\beta)^{-1} \langle f^2 g, \ind_{D_1} \rangle_{L^2(D_0,\mu)} \nonumber \\
 & \le 4 c_1^{-2} r^{-\beta}\sum_{y \in D_1} f^2(y) \mu(y).
\end{align}
We use Cauchy-Schwarz inequality, the $\mu$-symmetry of $K_U$ and $ab \le a^2/4+ b^2$ to obtain
\begin{align}
 \lefteqn{\abs{\sum_{y \in X} g(y)\Gamma_U (f^2,g)(y)} } \nonumber\\
\nonumber  & =  \frac{1}{2} \abs{\sum_{y,z \in U} g(y) (f^2(y)- f^2(z))(g(y)-g(z)) k(y,z) \mu(y)\mu(z)}\\
\nonumber  & \le \frac{1}{2} \abs{ \sum_{y,z \in U} g(y) f(y) (f(y)- f(z))(g(y)-g(z)) k(y,z) \mu(y)\mu(z)}\\
&\hspace{4mm} + \frac{1}{2} \abs{ \sum_{y,z \in U} g(y) f(z) (f(y)- f(z))(g(y)-g(z)) k(y,z) \mu(y)\mu(z)} \nonumber\\
& \le \frac{1}{4} \sum_{y \in U}f^2(y) \Gamma_U(g,g)(y)  + \sum_{y \in U} g^2(y) \Gamma_U(f,f)(y) \nonumber\\
 &\hspace{4mm} + \frac{1}{4} \sum_{z \in U} f^2(z) \Gamma_U(g,g)(z) + \sum_{y \in U} g^2(y) \Gamma_U(f,f)(y) \nonumber\\
 & = \frac{1}{2} \sum_{y \in U} f^2(y) \Gamma_U(g,g)(y) + 2 \sum_{y \in U} g^2(y) \Gamma_U(f,f)(y). \label{e-cfs5}
\end{align}
Combining the above, we obtain
\begin{align*}
  \sum_{y \in U}f^2(y) \Gamma(\phi,\phi)(y) & \le \sum_{y \in U}f^2(y) \Gamma(g,g)(y) \\
 &\le 2 \sum_{y \in X} \Gamma_U (f^2g,g)(y) + 4 \sum_{y \in U} g^2(y) \Gamma_U(f,f)(y) \\
& \le 4 c_1^{-2} \sum_{y \in U} \Gamma_U(f,f)(y) + 8 c_1^{-2} r^{-\beta} \sum_{y \in U} f^2(y) \mu(y).
\end{align*}
For the first line above we use $\abs{\phi(y) - \phi(z)} \le \abs{g(y) - g(z)}$ , the second line follows from  \eqref{e-cfs3} and \eqref{e-cfs5}, and the last line
follows from   \eqref{e-cfs4} and   $g \le c_1^{-1}$
\end{proof}
The ``linear cutoff function" for $B(x,r) \subset B(x,2r)$
\begin{equation} \label{e-linear}
\psi(y) = \max\left(\min\left(1, \frac{2r -  d(x,y)}{r}\right), 0 \right)
\end{equation}
is commonly used to obtain off-diagonal estimates using Davies' method (see, for example, \cite{Dav2,BGK}). We will first see how this linear cutoff functions compares to the ones obtained in Proposition \ref{p-csa} satisfying inequality \eqref{e-csa3}.
\begin{lem} \label{l-linear}
Let $(X,d,\mu)$ be a vertex weighted graph satisfying \ref{df}. Let $K$ be a $\mu$-symmetric Markov operator satisfying \hyperlink{j-beta}{$\operatorname{J}(\beta)$}, for some $\beta \ge 2$. Let  $\psi$ is the cutoff function in \eqref{e-linear} for $B(x,R) \subset B(x,R+r)$ for some $x \in X$ and $r \ge 1$.  Then there exists $C_1>0$ such that the corresponding energy measure $\Gamma(\psi,\psi)$  satisfies the inequality
\begin{equation} \label{e-lcf}
  \Gamma(\psi,\psi)(y) \le \begin{cases} \frac{C_1}{r^2} & \mbox{if $\beta >2$} \\
\frac{C_1 \log(1+r)}{r^2} & \mbox{if $\beta =2$} \end{cases}
\end{equation}
for all $y \in X$,
where $C_1$ does not depend on $x \in X$, $r\ge 1$ or $R > 0$.
\end{lem}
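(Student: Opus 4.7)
The plan is to use only two inputs: the Lipschitz/boundedness properties of $\psi$, and the upper bound in \ref{j-beta} combined with the volume bound \ref{df}. The operator-theoretic hypothesis \hyperlink{uhkp}{$\operatorname{UHKP}(\df,\beta)$} is not actually needed for this particular estimate; only the kernel bound \ref{j-beta} is used.

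First I would record two elementary properties of $\psi$: (i) $0\le \psi \le 1$ and (ii) $\abs{\psi(y)-\psi(z)}\le d(y,z)/r$ for all $y,z\in X$, so that
\[
(\psi(y)-\psi(z))^2 \le \min\!\left(1,\frac{d(y,z)^2}{r^2}\right).
\]
Then, by Definition~\ref{d-ener} together with \eqref{e-count} and \ref{j-beta},
\[
\Gamma(\psi,\psi)(y) \le C\sum_{z\ne y}\min\!\left(1,\frac{d(y,z)^2}{r^2}\right)\frac{\mu(z)}{(1+d(y,z))^{\df+\beta}},
\]
where $C$ depends only on the constants in \eqref{e-count} and \ref{j-beta}.

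Next I would decompose $X\setminus\{y\}$ into dyadic annuli $A_k=\{z:2^k\le d(y,z)<2^{k+1}\}$ for $k\ge 0$. By \ref{df} (and \eqref{e-count}), $\mu(A_k)\le \mu(B(y,2^{k+1}))\le C_V 2^{(k+1)\df}$. Hence the contribution of $A_k$ to the sum is at most
\[
C'\min\!\left(1,\frac{2^{2k}}{r^2}\right)2^{-k\beta}.
\]
Let $k_0$ be the unique non-negative integer with $2^{k_0}\le r<2^{k_0+1}$. For $k\ge k_0$ the minimum equals $1$, and summing the geometric series gives a bound $\lesssim r^{-\beta}$. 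For $0\le k<k_0$ the minimum equals $2^{2k}/r^2$, and I would evaluate
\[
r^{-2}\sum_{k=0}^{k_0-1}2^{k(2-\beta)}.
\]
When $\beta>2$ this geometric sum is uniformly bounded and yields $O(r^{-2})$; when $\beta=2$ each term equals $1$, the sum equals $k_0\asymp \log(1+r)$, and one gets $O(r^{-2}\log(1+r))$. Combining with the tail piece $r^{-\beta}$ (which is $\le r^{-2}$ when $\beta\ge 2$ and $r\ge 1$) yields exactly the two cases of \eqref{e-lcf}.

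The constants depend only on $C_\mu$, $C_V$, $C$ in \ref{j-beta} and (in the case $\beta>2$) on $\beta$ itself through the geometric ratio $2^{2-\beta}$; in particular they are independent of $x\in X$, $R>0$, and $y\in X$. No step is really a difficulty: the only thing to watch is that the sharp logarithm in the borderline case $\beta=2$ comes precisely from the divergence of $\sum_{k<k_0}1$, which is why the two regimes of $\beta$ must be separated.
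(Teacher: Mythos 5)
Your proof is correct and follows essentially the same route as the paper: bound $(\psi(y)-\psi(z))^2$ by $\min(1,d(y,z)^2/r^2)$ using boundedness and Lipschitz-ness, then decompose into dyadic annuli with the break at scale $r$, and sum the resulting geometric series, splitting $\beta>2$ from $\beta=2$. Your side remark that only the one-step kernel bound is used is also accurate (and consistent with the paper, which invokes \ref{j-beta} in its proof; under the stated hypothesis one recovers this by setting $n=1$ in \hyperlink{uhkp}{$\operatorname{UHKP}(\df,\beta)$}).
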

\begin{proof}
Let $k$ denote the $\mu$-symmetric kernel of $K$.
Note that $\psi$ is $1/r$-Lipschitz function and $\abs{\psi(y)-\psi(z)} \le 1$ for all $y,z \in X$. Therefore
\begin{align} \label{e-lcf1}
\nonumber \Gamma(\psi,\psi)(y) & =  \sum_{z \in X}  (\psi(y)-\psi(z))^2 k(y,z) \mu(z)\mu(y) \\
& \le   \sum_{z \in B(y,r)} r^{-2} d(y,z)^2 k(y,z) \mu(z)\mu(y) + \sum_{z \notin B(y,r)} k(y,z) \mu(z)\mu(y).
\end{align}
To bound  the second term above, by \ref{j-beta} and \ref{df} there exists $C_2,C_3 >0$ such that
\begin{align} \label{e-lcf2}
\sum_{z \notin B(y,r)} k(y,z) \mu(z)\mu(y)
&  \le   \sum_{z \notin B(y,r)} \frac{C_2 \mu(z) }{(d(y,z))^{\df+\beta}} \nonumber \\
& =   \sum_{i=1}^\infty \sum_{z:  2^{i-1}r <d(y,z) \le 2^{i}r}\frac{C_2 \mu(z) }{(d(y,z))^{\df+\beta}} \nonumber
\le  \sum_{i=1}^\infty\frac {C_2 V(y,2^{i}r)}{ (2^{i-1}r)^{\df+\beta}} \\
& \le C_3 r^{-\beta} \le C_3 r^{-2}
\end{align}
for all $y \in X$ and for all $r\ge 1$.

For the first term in \eqref{e-lcf1}, by \ref{j-beta} and \ref{df}  there exists $C_4,C_5>0$ such that
\begin{align} \label{e-lcf3}
\nonumber \sum_{z \in B(y,r)}  d(y,z)^2 k(y,z) \mu(z)\mu(y) &\le \sum_{i=0}^{\lceil \log_2 r \rceil}   \sum_{z \in X: 2^i \le d(y,z)< 2^{i+1}} d(y,z)^2 k(y,z) \mu(z)\mu(y)\\
\nonumber & \le C_4  \sum_{i=0}^{\lfloor \log_2 r \rfloor}  2^{i(2 - \beta)}  \\
&\le \begin{cases}  C_5 \log (1+r) & \mbox{if $\beta=2$} \\
C_5 & \mbox{if $\beta>2$}
\end{cases}
\end{align}
for all $y \in X$ and for all $r\ge 1$.
Combining \eqref{e-lcf1}, \eqref{e-lcf2} and \eqref{e-lcf3}, we obtain the desired bound \eqref{e-lcf}.
\end{proof}
For a $\mu$-symmetric Markov operator $K$ with kernel $k(\cdot,\cdot)$ and for bounded function $f$, we define the energy measure corresponding to a truncation at scale $L>0$ as
\begin{equation}\label{e-trunc}
\Gamma_{(L)}(f,f)(x) = \frac{1}{2} \sum_{y \in B(x,L)} (f(x)-f(y))^2  k(x,y) \mu(y) \mu(x).
\end{equation}
Next, we show a self-improving property of the cutoff-Sobolev inequality that will play an important role in the next section.
\begin{prop} \label{p-csj}
	Let $(X,d,\mu)$ be a vertex weighted graph satisfying \ref{df} with volume growth exponent $\df$.
	Let $K$ be $\mu$-symmetric Markov operator whose kernel with respect to $\mu$ satisfies \ref{j-beta} for some $\beta \ge 2$. If $K$ satisfies \hyperlink{csj}{$\operatorname{CSJ}(\beta)$}, then $K$ satisfies the following estimate:
	there exists $C_1,C_2>0$
	such that for all $n \in \N^*$,  for all $x \in X$, for all $r \ge 1$, there exists a cut-off function $\phi_n$ for $B(x,r) \subset B(x,2r)$ such that
	\begin{equation} \label{e-csj1}
	\sum_{y \in X} f^2(y) \Gamma(\phi_n,\phi_n)(y) \le \frac{C_1}{n^2} \sum_{y \in X} \Gamma(f,f)(y)  + \frac{C_2 \G_\beta(n)}{r^\beta} \sum_{y \in X} f^2(y) \mu(y),
	\end{equation}
	where the function $\G_\beta$ is given by
	\begin{equation}\label{e-defG}
	\G_\beta(n) = \begin{cases} n^{\beta-2} & \mbox{if $\beta >2$} \\
	\log(1+n) &\mbox{ if $\beta=2$}.
	\end{cases}
	\end{equation}
	Further the function $\phi_n$ above satisfies
	\begin{equation}\label{e-csj2}
	\norm{\phi_n-\psi}_\infty \le n^{-1},
	\end{equation}
	where $\psi$ is the linear cutoff function given by
	\[
	\psi(y) = \max\left(\min\left(1, \frac{2r -  d(x,y)}{r}\right), 0 \right).
	\]
\end{prop}
\begin{proof}
Let $\Gamma$ denote the energy measure of $K$.
By Lemma \ref{l-linear}, without loss of generality we may assume $r > 10 n$.
We divide the annulus $U= B(x,2r) \setminus B(x,r)$ into $n$-annuli $U_1,U_2,\ldots,U_n$ of equal width, where
\[
U_i:= B(x,r+  ir/n) \setminus  B(x,r+(i-1)r/n), \hspace{5mm} i=1,2,\ldots,n.
\]
By Proposition \ref{p-csa},  there exists $C_3,C_4>0$ and cutoff functions $\phi_{(i)}$ for $B(x,r+(i-1)r/n) \subset B(x,r+ir/n)$ satisfying $0 \le \phi_{(i)} \le 1$,
\begin{align}\label{e-sip1}
\phi_{(i)} &\equiv 1 \hspace{6mm}\mbox{in $B(x,r+(i-1)r/n+r/(2n))$,}\\
\phi_{(i)} &\equiv 0 \hspace{6mm}\mbox{in $B(x,r+(i-1)r/n + 9r/(10n))^\complement$,}\label{e-sip2} \\
\sum_{y \in U_i} f^2(y) \Gamma_{U_i}(\phi_{(i)},\phi_{(i)})(y) &\le C_3 \sum_{y \in U_i} \Gamma_{U_i}(f,f)(y) + \frac{C_4}{ (r/n)^\beta} \sum_{U_i} f^2(y) \mu(y)\label{e-sip3}
\end{align}
for $i=1,2,\ldots,n$. We define $\phi_n= n^{-1} \sum_{i=1}^n \phi_{(i)}$, $s= r/(10n)$ and $\Gamma_{(s)}$ truncated energy measure at scale $s$ as given by \eqref{e-trunc}.
Note that $\phi_n$ satisfies \eqref{e-csj2} because $\phi_n(y) \in [1-i/n, 1-(i-1)/n]$ for all $y \in U_i$ and for all $i=1,2,\ldots,n$.
 By \eqref{e-sip1} and \eqref{e-sip2}, we have
\begin{equation} \label{e-sip4}
\Gamma_{(s)}(\phi_n,\phi_n)(y) = n^{-2} \sum_{i=1}^n \Gamma_{(s)}(\phi_{(i)},\phi_{(i)})(y) \le n^{-2}\sum_{i=1}^n \Gamma_{U_i}(\phi_{(i)},\phi_{(i)})(y)
\end{equation}
for all $y \in X$.
Combining \eqref{e-sip3} and \eqref{e-sip4}, we obtain
\begin{equation}\label{e-sip5}
\sum_{y \in X} f^2(y) \Gamma_{(s)}(\phi_n,\phi_n)(y) \le C_3 n^{-2} \sum_{y \in X} \Gamma(f,f)(y) + \frac{C_4 n^{\beta-2}}{r^\beta} \sum_{y \in X} f^2(y)\mu(y).
\end{equation}
To bound $\Gamma(\phi_n,\phi_n)(y) - \Gamma_{(s)}(\phi_n,\phi_n)(y)$, we write
\begin{equation} \label{e-sip6}
\Gamma(\phi_n,\phi_n) - \Gamma_{(s)}(\phi_n,\phi_n) =  \Gamma(\phi_n,\phi_n) - \Gamma_{(r)}(\phi_n,\phi_n) + \Gamma_{(r)}(\phi_n,\phi_n)- \Gamma_{(s)}(\phi_n,\phi_n).
\end{equation}
 Since $0 \le \phi_n \le 1$ by \eqref{e-lcf2} there exists $C_5>0$ such that
\begin{equation}\label{e-sip7}
\Gamma(\phi_n,\phi_n)(y)  - \Gamma_{(r)}(\phi_n,\phi_n)(y) \le \sum_{y \in B(x,r)^\complement} k(x,y) \mu(x)\mu(y) \le C_5 r^{-\beta} 
\end{equation}
for all $r>0$ and $n \in \N^*$.
Note that for all $y,z \in X$ such that $d(y,z) \ge s =r/(10n)$, by \eqref{e-csj2} we have $\abs{\phi_n(y)-\phi_n(z)} \le r^{-1} d(y,z) +2/n \le  21 r^{-1} d(y,z)$.
Therefore by a similar calculation as \eqref{e-lcf3}, there exists $C_6>0$ such that, for all $y \in X$, for all $n \in \N^*$ and for all $r > 10n$, we have
\begin{align}\label{e-sip8}
 \lefteqn{\Gamma_{(r)}(\phi_n,\phi_n)(y)- \Gamma_{(s)}(\phi_n,\phi_n)(y) }\nonumber \\
 &= \frac{1}{2} \sum_{z \in B(y,r) \setminus B(y,s)} (\phi_n(y)- \phi_n(z))^2 k(y,z) \mu(z)\mu(y) \nonumber \\
&\le  \frac{21^2}{2} \sum_{z \in B(y,r) \setminus B(y,s)} r^{-2} d(y,z)^2 k(y,z) \mu(z)\mu(y) \nonumber \\
& \le   \frac{21^2}{2} \sum_{i =0}^{\lfloor \log_2(10n) \rfloor} \sum_{z \in B(y,2^{i+1}s) \setminus B(y,2^is)} r^{-2} d(y,z)^2 k(y,z) \mu(z)\mu(y)  \nonumber \\
&\le C_6 \frac{ \G_\beta(n)}{ r^\beta}.
\end{align}
Combining \eqref{e-sip5}, \eqref{e-sip6}, \eqref{e-sip7} and \eqref{e-sip8}, we obtain
\begin{equation}\label{e-sip9}
\sum_{y \in X} f^2(y) \Gamma(\phi_n,\phi_n)(y) \le \frac{C_3}{n^2} \E_K(f,f)+ (C_4+C_6+C_7) \frac{\G_\beta(n)}{r^\beta} \sum_{y \in X} f^2(y) \mu(y)
\end{equation}
for all $x \in X$, for all $n \in \N^*$ and for all $r > 10n$. Combining \eqref{e-sip6} with Lemma \ref{l-linear} yields the desired result.
\end{proof}
\section{Davies' method}\label{s-dav}
In this section, we carry out the Davies perturbation method to obtain heat kernel upper bounds \hyperlink{uhkp}{$\operatorname{UHKP}(\df,\beta)$} for heavy tailed walks satisfying \ref{j-beta} and the
cutoff Sobolev inequality  \hypertarget{csj}{$\operatorname{CSJ}(\beta)$}, for some $\beta \ge 2$.
For the case the $\beta \in (0,2)$, in \cite{BGK} the heat kernel upper bounds for the corresponding continuous time process was obtained. The corresponding discrete time bounds were obtained in \cite{MS1}.

The idea behind the approach of \cite{BGK} is to use Meyer's construction \cite{Mey} to split the jump kernel intro small and large sums and then apply Davies' method for the smaller jumps (see \cite[Section 3]{BGK}).
However as mentioned in the introduction (see \cite[Section 1]{GHL}),
 the above approach is no longer adequate to obtain \hyperlink{uhkp}{$\operatorname{UHKP}(\df,\beta)$} for the case $\beta \ge 2$.
The goal of this section is to modify Davies' perturbation method to obtain upper bounds for heavy tailed jump processes satisfying \ref{j-beta} for the case $\beta \ge 2$.
The following Proposition is the converse of Proposition \ref{p-csj}.
\begin{prop}\label{p-offd}
Let $(X,d,\mu)$ be a vertex weighted graph satisfying  \ref{df} with volume growth exponent $\df$.
Let $K$ be $\mu$-symmetric Markov operator whose kernel with respect to $\mu$ satisfies \ref{j-beta} for some $\beta \ge 2$. If $K$ satisfies the cutoff Sobolev inequality \hyperlink{csj}{$\operatorname{CSJ}(\beta)$}, then $K$ satisfies the transition probability upper bounds \hyperlink{uhkp}{$\operatorname{UHKP}(\df,\beta)$}.
\end{prop}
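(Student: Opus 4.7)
The plan is to implement Davies' perturbation method using the cutoff functions furnished by \hyperlink{csj}{$\operatorname{CSJ}(\beta)$}, exploiting the $L^\infty$-boundedness of these cutoffs to handle the heavy-tailed jumps. I will pass to the continuous-time semigroup $P_t = e^{-t(I-K)}$; the upper bound \hyperlink{uhkp}{$\operatorname{UHKP}(\df,\beta)$} for $k_n$ is equivalent to the analogous upper bound for the kernel $p_t$ via a standard discretization exploiting $\mu_x \asymp 1$. The on-diagonal piece $p_t(x,x) \le C t^{-\df/\beta}$ follows from \ref{j-beta} and \ref{df} through a Nash inequality and Moser-type iteration, so the crux of the work is the off-diagonal polynomial decay.

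Fix $x_0 \in X$, $R \ge 1$, and let $r$, $n$, $\lambda$ be parameters to be tuned. By \hyperlink{csj}{$\operatorname{CSJ}(\beta)$}, there is a cutoff $\phi_n$ for $B(x_0,r) \subset B(x_0,2r)$ with $0 \le \phi_n \le 1$. Introduce the Davies twist $\widetilde{P}_t f = e^{-\lambda \phi_n} P_t(e^{\lambda \phi_n} f)$, whose kernel satisfies $\widetilde{p}_t(x,y) = e^{\lambda(\phi_n(y) - \phi_n(x))} p_t(x,y)$. Differentiating $\|\widetilde{P}_t f\|_2^2$, using the Dirichlet-form identity of Lemma \ref{l-folk} and the elementary inequality
\[
\bigl(e^{\lambda(\phi_n(x) - \phi_n(y))} - 1\bigr)^2 \le C \lambda^2 (\phi_n(x) - \phi_n(y))^2 e^{2\lambda},
\]
which exploits $\|\phi_n\|_\infty \le 1$, one derives
\[
\frac{d}{dt} \|\widetilde{P}_t f\|_2^2 \le -2 \E(u_t, u_t) + C \lambda^2 e^{2\lambda} \sum_{y \in X} u_t^2(y)\, \Gamma(\phi_n, \phi_n)(y),
\]
with $u_t = \widetilde{P}_t f$.

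Applying \hyperlink{csj}{$\operatorname{CSJ}(\beta)$} and choosing $n \ge C \lambda e^\lambda$ so that the $\tfrac{C_1 \lambda^2 e^{2\lambda}}{n^2} \E(u_t,u_t)$ term is absorbed into $-2\E(u_t,u_t)$, Gronwall gives
\[
\|\widetilde{P}_t\|_{2 \to 2}^2 \le \exp\!\Bigl( \frac{C \lambda^2 e^{2\lambda} \G_\beta(n)}{r^\beta}\, t \Bigr).
\]
Combining with the on-diagonal Nash bound through the standard Davies duality argument (Cauchy--Schwarz plus $\mu$-symmetry) produces
\[
p_t(x_0,y) \le C t^{-\df/\beta} \exp\!\Bigl(-\lambda\bigl(\phi_n(x_0)-\phi_n(y)\bigr) + \frac{C \lambda^2 e^{2\lambda} \G_\beta(n)}{r^\beta}\, t\Bigr).
\]
For $y$ with $d(x_0,y) = R$ and $r \asymp R$, one has $\phi_n(x_0)=1$, $\phi_n(y)=0$. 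Tuning $\lambda \asymp \log(R^\beta/t)$ and $n = \lceil C \lambda e^\lambda \rceil$, the positive term in the exponent is dominated by $\lambda$, yielding a weak polynomial bound $p_t(x_0,y) \le C(t/R^\beta)^{c}$ for some $c > 0$. Bootstrapping through semigroup iteration, or alternatively combining with a Meyer-style truncation decomposition $K = K^{(L)} + \widetilde{K}^{(L)}$ (so that long jumps of scale $\gtrsim L$ contribute directly via \ref{j-beta}), upgrades the exponent to the sharp $1 + \df/\beta$, giving $p_t(x_0,y) \le C t/R^{\df+\beta}$ in the regime $t \le R^\beta$. The regime $t \ge R^\beta$ is handled by the on-diagonal bound alone, since $t^{-\df/\beta} \le C t/R^{\df+\beta}$ there.

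The main obstacle is the $e^{2\lambda}$ penalty inherited from the $L^\infty$-boundedness of $\phi_n$: in the non-local setting with $\beta \ge 2$, Davies' method cannot yield exponential off-diagonal decay as in the diffusion case, and $\lambda$ must be kept of order at most $\log(R^\beta/t)$. Threading the three parameters $(\lambda, n, r)$ so that every error term balances against this logarithmic budget, while the constants in \hyperlink{csj}{$\operatorname{CSJ}(\beta)$} remain uniform in $n$, is the technical heart of the argument. The self-improving feature of \hyperlink{csj}{$\operatorname{CSJ}(\beta)$}---its availability for arbitrarily large $n$, not merely $n=1$---is indispensable, since a CSJ with $n=1$ could never absorb the $e^{2\lambda}$ blowup into the Dirichlet form.
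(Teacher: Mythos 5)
Your plan is the right general shape (Davies' twist plus the self-improving $\operatorname{CSJ}(\beta)$ to absorb the blow-up into the Dirichlet form), but it is missing the one ingredient that actually makes the argument close for $\beta \ge 2$: Davies' differential inequality has to be applied to the \emph{$L$-local truncation} $K^{(L)}$ of $K$, with $L = \vartheta r$ for a small $\vartheta$ (the paper takes $\vartheta = 1/(8(\df+\beta))$). Once you truncate, the oscillation of the cutoff over any jump the truncated operator can perform is $\osc(\phi,L) \lesssim \vartheta$ rather than $\lesssim 1$, so the elementary inequality gives a penalty $e^{2\lambda\osc(\phi,L)} \le e^{6\vartheta\lambda}$ instead of your $e^{2\lambda}$. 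This is precisely what the paper's Lemma \ref{l-leib} delivers (for $L$-local sub-Markov $T$) and what feeds into Lemma \ref{l-cutfun}, whose penalty is $e^{8\beta\vartheta\lambda}$, not $e^{2\lambda}$.

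Without the truncation the parameter budget does not close. Your Gronwall term is $\lambda^2 e^{2\lambda}\G_\beta(n)/r^\beta \cdot t$ with $n \gtrsim \lambda e^{\lambda}$, hence $\G_\beta(n) \gtrsim (\lambda e^\lambda)^{\beta-2}$ and the exponent contains $\gtrsim \lambda^\beta e^{\beta\lambda}\,t/r^\beta$. Keeping this bounded forces $\lambda < \beta^{-1}\log(r^\beta/t)$, so the gain $e^{-\lambda(\phi_n(x_0)-\phi_n(y))}$ can only produce $(t/r^\beta)^c$ for some $c<1/\beta$, which is strictly short of the required exponent $1+\df/\beta$. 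The bootstrap you propose to bridge this gap does not work as stated: semigroup iteration alone does not improve the polynomial exponent without fresh off-diagonal input, and the Meyer decomposition $K = K^{(L)} + (K - K^{(L)})$ must be applied \emph{before} Davies' method (so that the twist acts on $H^{(L)}_t$) — it is the very device that makes the oscillations small, not an independent post-processing step. The paper's chain is: truncate at $L=\vartheta r$, run the $L^p$-version of the Davies inequality (Lemma \ref{l-leib}) together with $\operatorname{CSJ}(\beta)$ at level $n \asymp p\lambda e^{3\vartheta\lambda}$ (Lemma \ref{l-cutfun}), Moser-iterate in $p_k=2^k$ via Lemma \ref{l-dife} to get $\|H_t^{L,\psi_\infty}\|_{1\to\infty}$, then choose $\lambda = \frac{\df+\beta}{\beta}\log(r^\beta/t)$ so that $e^{8\beta\vartheta\lambda}\,t/r^\beta = 1$ and $e^{-\lambda} = (t/r^\beta)^{1+\df/\beta}$, and finally use the Meyer bound \cite[Lemma 3.1]{BGK} to pass from $h_t^{(L)}$ back to $h_t$. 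Your draft also replaces the $L^p$ iteration by a single $L^2$ Gronwall plus Nash duality; that shortcut would need the Nash inequality to survive the twist, which the paper sidesteps by doing the CKS-style iteration over $p$ alongside the Davies perturbation. In short: introduce the $L$-local truncation and the small-$\vartheta$ oscillation control before anything else; the rest of your outline then assembles in essentially the paper's order.
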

We introduce two definitions below.
\begin{definition}
We say that a $\mu$-symmetric sub-Markov operator $T$ on a  graph $(X,d)$ is $L$-local if its corresponding kernel $t$ satisfies $t(x,y)=0$ whenever $x,y \in X$ satisfies $d(x,y) > L$.
\end{definition}
\begin{definition}
Let $\psi$ be a function on a graph $(X,d)$ and let $L >0$. We define the oscillation of $\psi$ at scale $L$ as
\[
\osc(\psi,L) := \sup_{x,y \in X: d(x,y) \le L} \abs{\psi(x)-\psi(y)}.
\]
\end{definition}
The following Lemma is an analogue of \cite[Theorem 3.9]{CKS}. The computations are similar to the ones in \cite{CKS} but we will use a different strategy to control the energy measure at various places.
\begin{lem}\label{l-leib}
Let $T$ be a $\mu$-symmetric, $L$-local, sub-Markov operator on a vertex weighted graph $(X,d,\mu)$ and let $\Gamma$ denote the corresponding energy measure. Then for any function $\psi \in L^\infty(X,\mu)$  with bounded support, for all $p \ge 1$ and for all $f \in L^2(X,\mu)$ with $f \ge 0$, we have
\begin{align} \label{e-dv}
\MoveEqLeft{\sum_{x \in X} \Gamma(e^{-\psi}f,e^\psi f^{2p-1})(x)} \nonumber \\
  &\ge \frac{1}{2p} \sum_{x \in X} \Gamma(f^p,f^p)(x) - 9p e^{2 \osc(\psi,L)} \sum_{x \in X} f^{2p}(x)\Gamma(\psi,\psi)(x).
\end{align}
\end{lem}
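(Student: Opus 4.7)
My plan is pointwise-then-sum. Because $T$ is $L$-local, the kernel satisfies $t(x,y)=0$ whenever $d(x,y)>L$, so every pair contributing to $\Gamma$ obeys $|\psi(x)-\psi(y)|\le R:=\osc(\psi,L)$. Writing $a=f(x)$, $b=f(y)$, and $\eta=\psi(x)-\psi(y)$, the integrand of $\Gamma(e^{-\psi}f,e^{\psi}f^{2p-1})$ at $(x,y)$ simplifies algebraically to $a^{2p}+b^{2p}-e^{\eta}ab^{2p-1}-e^{-\eta}a^{2p-1}b$. The whole lemma reduces to the pointwise inequality
\begin{equation} \label{e-ptwise-plan}
a^{2p}+b^{2p}-e^{\eta}ab^{2p-1}-e^{-\eta}a^{2p-1}b \;\ge\; \frac{1}{2p}(a^p-b^p)^2 \;-\; 9p\,e^{2|\eta|}\eta^{2}\cdot\frac{a^{2p}+b^{2p}}{2}
\end{equation}
for all $a,b\ge 0$, real $\eta$, and $p\ge 1$, which I then multiply by $\tfrac12\, t(x,y)\mu(x)\mu(y)$ and sum over $x,y\in X$.

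To prove \eqref{e-ptwise-plan}, I use the Taylor expansion $e^{\pm\eta}=1\pm\eta+\eta^{2}\rho_{\pm}$ with $0\le\rho_{\pm}\le \tfrac12 e^{|\eta|}$ to rewrite the left-hand side as $(a-b)(a^{2p-1}-b^{2p-1})+\eta\cdot ab(a^{2p-2}-b^{2p-2})-\eta^{2}\bigl(\rho_{+}ab^{2p-1}+\rho_{-}a^{2p-1}b\bigr)$. The leading term obeys the elementary convexity bound $(a-b)(a^{2p-1}-b^{2p-1})\ge \frac{2p-1}{p^{2}}(a^{p}-b^{p})^{2}$, which for $p\ge 1$ exceeds $\frac{1}{2p}(a^{p}-b^{p})^{2}$ by a safety margin of $\frac{3p-2}{2p^{2}}(a^{p}-b^{p})^{2}$. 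The quadratic-in-$\eta$ remainder is absorbed directly, since $\tfrac12 e^{|\eta|}\eta^{2}(ab^{2p-1}+a^{2p-1}b)\le \tfrac12 e^{|\eta|}\eta^{2}(a^{2p}+b^{2p})$ by Young's inequality. The linear-in-$\eta$ piece $\eta\cdot ab(a^{2p-2}-b^{2p-2})$ vanishes at $p=1$ but is genuine for $p>1$; I control it by a weighted Cauchy--Schwarz, using the auxiliary elementary bound $\bigl[ab(a^{2p-2}-b^{2p-2})\bigr]^{2}\le C\,(a^{p}-b^{p})^{2}(a^{2p}+b^{2p})$ with an absolute constant $C$ (derived from the factorisation $a^{2p-2}-b^{2p-2}=(a^{p-1}-b^{p-1})(a^{p-1}+b^{p-1})$ and Young) and tuning the weight so the $X^{2}$-part is absorbed by the safety margin and the $\eta^{2}$-part lies within the $9p$ budget. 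Collecting gives \eqref{e-ptwise-plan}.

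The passage from \eqref{e-ptwise-plan} to \eqref{e-dv} is routine. Summing against $\tfrac12\, t(x,y)\mu(x)\mu(y)$ produces $\sum_{x}\Gamma(e^{-\psi}f,e^{\psi}f^{2p-1})(x)$ on the left and $\tfrac{1}{2p}\sum_{x}\Gamma(f^{p},f^{p})(x)$ as the main term on the right. The error, after symmetrising via $t(x,y)=t(y,x)$, collapses $\tfrac{1}{4}\sum_{x,y}(f^{2p}(x)+f^{2p}(y))(\psi(x)-\psi(y))^{2}t(x,y)\mu(x)\mu(y)$ to $\sum_{x}f^{2p}(x)\Gamma(\psi,\psi)(x)$, and $L$-locality upgrades $e^{2|\eta|}$ to $e^{2\osc(\psi,L)}$. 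The sole obstacle is the pointwise control of the linear cross term $\eta\cdot ab(a^{2p-2}-b^{2p-2})$: it is absent in the $p=1$ or local diffusion settings but unavoidable here, and the whole arithmetic of \eqref{e-dv} hinges on the auxiliary bound $\bigl[ab(a^{2p-2}-b^{2p-2})\bigr]^{2}\lesssim (a^{p}-b^{p})^{2}(a^{2p}+b^{2p})$. This is precisely the ``different strategy to control the energy measure'' flagged in the remark preceding the lemma; everything else is algebraic bookkeeping with constants.
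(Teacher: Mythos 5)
Your argument is correct and takes a genuinely different route from the paper's. The paper works at the level of the Dirichlet form: it applies Lemma \ref{l-tech} and the Leibniz rule to split $\sum_x\Gamma(e^{-\psi}f,e^\psi f^{2p-1})(x)$ into three integrated pieces, bounds each via Cauchy--Schwarz \emph{on the sums}, and then invokes the integrated Stroock--Varopoulos inequalities \eqref{e-dv8}--\eqref{e-dv9} together with the $L$-local estimate $(e^a-1)^2\le e^{2|a|}a^2$. You instead reduce everything to a single scalar inequality in $(a,b,\eta)$ by unwinding the definition of $\Gamma$ as a double sum, then control the Taylor expansion of $e^{\pm\eta}$ termwise. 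The ingredients are the same (the Stroock--Varopoulos inequality $(a-b)(a^{2p-1}-b^{2p-1})\ge\frac{2p-1}{p^2}(a^p-b^p)^2$, Cauchy--Schwarz, locality to upgrade $|\eta|$ to $\osc(\psi,L)$), but your decomposition is by Taylor order in $\eta$ rather than by the paper's algebraic regrouping in \eqref{e-dv4}, and your Cauchy--Schwarz is pointwise rather than on the sums. The price of the pointwise route is the new auxiliary bound $[ab(a^{2p-2}-b^{2p-2})]^2\le C(a^p-b^p)^2(a^{2p}+b^{2p})$, which does hold with $C=2$: for $a\ge b\ge 0$ one has $a(a^{p-1}-b^{p-1})\le a^p-b^p$, hence $ab(a^{2p-2}-b^{2p-2})\le b(a^p-b^p)(a^{p-1}+b^{p-1})$, and $b^2(a^{p-1}+b^{p-1})^2\le 2b^2(a^{2p-2}+b^{2p-2})\le 2(a^{2p}+b^{2p})$. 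Tuning the weight so the $(a^p-b^p)^2$ part is covered by the margin $\frac{3p-2}{2p^2}$ gives an $\eta^2$-coefficient $\frac{p^2}{3p-2}\le p$, and together with the $\frac12 e^{|\eta|}$ from the quadratic remainder this sits comfortably under $\frac{9p}{2}e^{2|\eta|}$. The one small inaccuracy in your commentary is the claim that your auxiliary inequality "is precisely the `different strategy' flagged in the remark"---that remark refers to how the paper departs from \cite{CKS}, not to your bound---but this does not affect correctness. Net, your version is a legitimate and somewhat more elementary proof: it avoids the Leibniz-rule machinery at the cost of one extra elementary scalar inequality.
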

\begin{proof}
Let $t$ denote the kernel of $T$ with respect to $\mu$.
Then by Lemma \ref{l-tech}, we have
\begin{align} \label{e-dv1}
\sum_{x \in X} \Gamma (e^{- \psi} f,  e^{\psi} f^{2p-1})(x) & = \sum_{x \in X} \left( \Gamma(e^{-\psi} f^{2p}, e^{\psi})(x) +  \Gamma(f,f^{2p-1})(x) \right) \nonumber \\
& \hspace{6mm} - 2 \sum_{x \in X} e^{- \psi(x)} f(x) \Gamma(e^{\psi}, f^{2p-1})(x).
\end{align}
A diligent reader will observe that one cannot directly apply Lemma \ref{l-tech}
because $e^\psi \notin L^2(X,\mu)$. However, since $\psi$ has bounded support, $e^\psi -1 \in L^2(X,\mu)$ and we can apply Lemma \ref{l-tech} with $f,g,h$ replaced by $e^\psi-1,e^{-\psi}f,f^{2p-1}$ respectively. Then we use the remark following Lemma \ref{l-tech} to obtain \eqref{e-dv1}. 

We use Leibniz rule (Lemma \ref{l-folk}(b)) for the first term in the right in \eqref{e-dv1} to obtain
\begin{align} \label{e-dv2}
\sum_{x \in X} \Gamma(e^{- \psi} f,  e^{ \psi} f^{2p-1})(x)
 & = \sum_{x \in X}f^{2p-1}(x) \Gamma(e^{- \psi} f, e^{ \psi})(x) \\
\nonumber & \hspace{5mm} +  \sum_{x \in X} \left( \Gamma(f,f^{2p-1})(x)  - e^{-\psi(x)} f(x) \Gamma (e^{\psi}, f^{2p-1})(x) \right).
\end{align}
Note that
\begin{align} \label{e-dv3}
\MoveEqLeft{\sum_{x \in X}
\left(  f^{2p-1}(x) \Gamma(e^{- \psi} f, e^{ \psi})(x)  -e^{-\psi(x)} f(x) \Gamma (e^{\psi}, f^{2p-1})(x) \right)}  \\
& = \frac{1}{2} \sum_{x,y \in X} \left\{ \left(e^{-\psi(x)}f(x) f^{2p-1}(y) - e^{-\psi(y)} f(y) f^{2p-1}(x) \right) \right. \nonumber \\
& \hspace{40mm} \times \left. \left(e^{\psi(x)} - e^{\psi(y)} \right) t(x,y) \mu(x) \mu(y) \right\}. \nonumber
\end{align}
We define $a(x,y):= t(x,y) \mu(x) \mu(y)$ for all $x,y \in X$.
Next, we rewrite the right side of \eqref{e-dv3} as
\begin{align}\label{e-dv4}
\MoveEqLeft{\frac{1}{2} \sum_{x,y \in X} (f^{2p}(y)- f^{2p}(x))e^{-\psi(x)}(e^{\psi(x)}-e^{\psi(y)}) a(x,y)} \nonumber \\
&+ \frac{1}{2}  \sum_{x,y \in X} f^{2p}(x) (e^{-\psi(x)}-e^{-\psi(y)}) (e^{\psi(x)}-e^{\psi(y)}) a(x,y) \nonumber \\
&+ \sum_{x,y \in X} f^{2p-1}(y) (f(x)-f(y)) e^{\psi(y)} (e^{-\psi(y)} - e^{-\psi(x)})a(x,y).
\end{align}
For the first term in \eqref{e-dv4},  we use Cauchy-Schwartz inequality to obtain
\begin{align} \label{e-dv5}
\MoveEqLeft{\frac{1}{2} \sum_{x,y \in X} (f^{2p}(y)- f^{2p}(x))e^{-\psi(x)}(e^{\psi(x)}-e^{\psi(y)}) a(x,y)} \nonumber \\
&= \frac{1}{2} \sum_{x,y \in X} f^p(y) (f^{p}(y)- f^{p}(x))e^{\psi(y)}(e^{-\psi(y)}-e^{-\psi(x)}) a(x,y) \nonumber \\
& + \frac{1}{2} \sum_{x,y \in X} f^p(x) (f^{p}(y)- f^{p}(x))e^{-\psi(x)}(e^{\psi(x)}-e^{\psi(y)}) a(x,y) \nonumber \\
&\ge - \left(\sum_{x \in X} \Gamma(f^p,f^p)(x) \right)^{1/2} \left[ \left( \sum_{y \in X} f^{2p}(y) e^{2\psi(y)} \Gamma(e^{-\psi},e^{-\psi})(y) \right)^{1/2} \right. \nonumber \\
& \hspace{40mm}+\left. \left(\sum_{x \in X} f^{2p}(x) e^{-2\psi(x)} \Gamma(e^{\psi},e^{\psi})(x) \right)^{1/2} \right].
\end{align}
For the second term in \eqref{e-dv4},  we use Cauchy-Schwartz inequality to obtain
\begin{align} \nonumber
\MoveEqLeft{\frac{1}{2}  \sum_{x,y \in X} f^{2p}(x) (e^{-\psi(x)}-e^{-\psi(y)}) (e^{\psi(x)}-e^{\psi(y)}) a(x,y)= \sum_{x \in X} f^{2p}(x) \Gamma(e^{-\psi},e^\psi)(x)}\\
\MoveEqLeft{ \ge - \left(\sum_{x \in X} f^{2p}(x) e^{2\psi(x)} \Gamma(e^{-\psi},e^{-\psi})(x) \right)^{1/2} \left(\sum_{x \in X} f^{2p}(x) e^{-2\psi(x)} \Gamma(e^{\psi},e^{\psi})(x) \right)^{1/2}}.\label{e-dv6}
\end{align}
For the last term in \eqref{e-dv4}, we use Cauchy-Schwartz inequality to obtain
\begin{align} \label{e-dv7}
\MoveEqLeft{\sum_{x,y \in X} f^{2p-1}(y) (f(x)-f(y)) e^{\psi(y)} (e^{-\psi(y)} - e^{-\psi(x)})a(x,y)} \\
 &\ge - 2\left( \sum_{x \in X} f^{2p-2}(x) \Gamma(f,f)(x) \right)^{1/2} \left( \sum_{x \in X} f^{2p}(x) e^{2 \psi(x)} \Gamma(e^{-\psi},e^{-\psi})(x)\right)^{1/2}. \nonumber
\end{align}
We need two more elementary inequalities (Cf. (3.16) and (3.17) in \cite{CKS}). For any non-negative function $f \in L^2(X,\mu)$ and for all $p \ge 1$, we have
\begin{equation}\label{e-dv8}
\sum_{x \in X} \Gamma(f^{2p-1},f)(x) \ge \sum_{x \in X} f^{2p-2}(x) \Gamma(f,f)(x) \ge \frac{1}{2p-1} \sum_{x\in X}  \Gamma(f^{2p-1},f)(x)
\end{equation}
and
\begin{equation}\label{e-dv9}
\sum_{x \in X} \Gamma(f^p,f^p)(x) \ge \sum_{x \in X}\Gamma(f^{2p-1},f)(x)  \ge \frac{2p-1}{p^2} \sum_{x \in X} \Gamma(f^p,f^p)(x).
\end{equation}
Since $T$ is $L$-local, we use the inequality  $(e^a-1)^2 \le e^{2 \abs{a}} a^2$ to obtain
\begin{equation}\label{e-dv10}
\max\left(e^{2\psi(x)} \Gamma(e^{-\psi},e^{-\psi})(x) ,e^{-2\psi(x)}  \Gamma(e^{\psi},e^{\psi})(x) \right)\le e^{2 \osc(\psi,L)} \Gamma(\psi,\psi)(x)
\end{equation}
for all $x \in X$.
Combining  equations \eqref{e-dv2} through \eqref{e-dv10}, we have
\begin{align}
\MoveEqLeft{\sum_{x \in X} \Gamma(e^{-\psi}f,e^\psi f^{2p-1})(x)} \nonumber \\
  &\ge \frac{2p-1}{p^2} \sum_{x \in X} \Gamma(f^p,f^p)(x) - e^{2 \osc(\psi,L)} \sum_{x \in X} f^{2p}(x)\Gamma(\psi,\psi)(x) \nonumber \\
&\hspace{5mm} - 4 e^{ \osc(\psi,L)} \left(\sum_{x \in X} \Gamma(f^p,f^p)(x) \right)^{1/2}   \left( \sum_{x \in X} f^{2p}(x)\Gamma(\psi,\psi)(x) \right)^{1/2}. \nonumber
&\hspace{5mm}
\end{align}
We use the inequality $4ab \le a^2/(2p) + 8p b^2$  and $p \ge 1$ to obtain
\begin{align*}
\MoveEqLeft{\sum_{x \in X} \Gamma(e^{-\psi}f,e^\psi f^{2p-1})(x)} \nonumber \\
  &\ge \frac{1}{2p} \sum_{x \in X} \Gamma(f^p,f^p)(x) - 9p e^{2 \osc(\psi,L)} \sum_{x \in X} f^{2p}(x)\Gamma(\psi,\psi)(x) \nonumber \\
\end{align*}
for all bounded functions $\psi$ and for all non-negative functions $f \in L^2(X,\mu)$.
\end{proof}
\begin{remark}
The elementary inequalities \eqref{e-dv8} and \eqref{e-dv9} are sometimes called Stroock-Varopolous inequalities. An unified approach to such inequalities is provided in \cite[Lemma 2.4]{MOS13}.
\end{remark}
The next step is to bound the second term in the right side of \eqref{e-dv}
using cutoff Sobolev inequalities developed in Section \ref{s-csj}.
\begin{lem} \label{l-cutfun}
Let $(X,d,\mu)$ be a vertex weighted graph satisfying \ref{df} with volume growth exponent $\df$.
Let $K$ be a $\mu$-symmetric Markov operator whose kernel $k=k_1$ satisfies \ref{j-beta} for some $\beta  \ge 2$.   Further assume that $K$ satisfies the cutoff Sobolev inequality \hyperlink{csj}{$\operatorname{CSJ}(\beta)$}. Let $\Gamma$ denote the energy measure corresponding to $K$ and let $\Gamma_{(L)}$ correspond to the truncated version of $\Gamma$ for some $L>0$.
 Define
\[
\vartheta:= \frac{1}{8(\df+\beta)}.
\]
Then there exists $\lambda_0  \ge 1$, $C_0 \ge 1$ such that the following property holds: For all $\lambda \ge \lambda_0$, for all $p \ge 1$, for all $x \in X$, for all $r \ge 1$, there exists a cut-off function $\phi= \phi_{p,\lambda}$  for $B(x,r) \subset B(x,2r)$ such that for all non-negative functions $f \in L^2(X,\mu)$ we have
\begin{equation} \label{e-cfn1}
 \sum_{y \in X} \Gamma_{(\vartheta r)}(e^{-\lambda \phi} f, e^{\lambda \phi} f^{2p-1}) \ge \frac{1}{4p} \sum_{y \in X} \Gamma(f^p,f^p)(y)  - \frac{C_0  p^\beta e^{8 \beta \vartheta \lambda}}{r^\beta} \sum_{y \in X} f^{2p}(y) \mu(y),
\end{equation}
where the cutoff function $\phi$ above satisfies
\begin{equation}\label{e-cfn2}
\norm{\phi-\psi}_\infty \le \frac{1}{\lambda p},
\end{equation}
where $\psi$ is given by \eqref{e-linear}.
\end{lem}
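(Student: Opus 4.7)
The plan is to combine Lemma~\ref{l-leib}, applied to a truncation of $K$ at scale $\vartheta r$, with the cutoff Sobolev inequality \hyperlink{csj}{$\operatorname{CSJ}(\beta)$}. Let $T$ be the $\mu$-symmetric sub-Markov operator with kernel $k(x,y)\ind_{\{d(x,y)\le\vartheta r\}}$; it is $\vartheta r$-local and its energy measure coincides with $\Gamma_{(\vartheta r)}$. For the cutoff I take $\phi=\phi_n$ produced by \hyperlink{csj}{$\operatorname{CSJ}(\beta)$} for $B(x,r)\subset B(x,2r)$, with the integer parameter $n=\lceil C_3\,\lambda p\,e^{\vartheta\lambda}\rceil$, the constant $C_3$ to be fixed below. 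Since $n\ge\lambda p$, property \eqref{e-csj2} immediately yields the required closeness \eqref{e-cfn2}. Moreover, $\psi$ defined by \eqref{e-linear} is $(1/r)$-Lipschitz, so for $d(y,z)\le\vartheta r$ the triangle inequality gives $\abs{\phi(y)-\phi(z)}\le 2/n+d(y,z)/r\le 2/n+\vartheta$; hence $\osc(\phi,\vartheta r)\le\vartheta+2/n$ and $e^{2\lambda\osc(\phi,\vartheta r)}\le e^{2\vartheta\lambda+4}$.

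Applying Lemma~\ref{l-leib} to $T$ with the weight $\lambda\phi$ produces
\begin{align*}
&\sum_y\Gamma_{(\vartheta r)}(e^{-\lambda\phi}f,e^{\lambda\phi}f^{2p-1})(y)\\
&\ge\frac{1}{2p}\sum_y\Gamma_{(\vartheta r)}(f^p,f^p)(y)-9p\lambda^2 e^{2\vartheta\lambda+4}\sum_y f^{2p}(y)\Gamma_{(\vartheta r)}(\phi,\phi)(y).
\end{align*}
Two adjustments follow. First, the long-jump residue satisfies $\sum\Gamma(f^p,f^p)-\sum\Gamma_{(\vartheta r)}(f^p,f^p)\le Cr^{-\beta}\sum f^{2p}\mu$, via $(f^p(y)-f^p(z))^2\le 2(f^{2p}(y)+f^{2p}(z))$ together with the tail bound $\sum_{d(y,z)>\vartheta r}k(y,z)\mu(z)\le Cr^{-\beta}$ (from \ref{j-beta} and \ref{df}, as in \eqref{e-lcf2}). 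Second, since $\Gamma_{(\vartheta r)}(\phi,\phi)\le\Gamma(\phi,\phi)$, inequality \eqref{e-csj1} applied with the non-negative function $f^p$ delivers $\sum f^{2p}\Gamma_{(\vartheta r)}(\phi,\phi)\le(C_1/n^2)\sum\Gamma(f^p,f^p)+(C_2 G_\beta(n)/r^\beta)\sum f^{2p}\mu$. Combining, the coefficient of $\sum\Gamma(f^p,f^p)$ in the error is bounded by $9p\lambda^2 e^{2\vartheta\lambda+4}C_1/n^2\le 9C_1 e^4/(C_3^2 p)$, which is $\le 1/(4p)$ once $C_3$ is fixed large enough; this term is absorbed into the positive part, leaving $\frac{1}{4p}\sum\Gamma(f^p,f^p)$ on the right. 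The remaining additive error is of order $p\lambda^2 e^{2\vartheta\lambda}G_\beta(n)/r^\beta\cdot\sum f^{2p}\mu$; inserting $G_\beta(n)\le C(\lambda p e^{\vartheta\lambda})^{\beta-2}$ for $\beta>2$ (respectively, the analogous logarithmic bound for $\beta=2$) and using $\lambda^\beta\le e^{7\beta\vartheta\lambda}$ for $\lambda\ge\lambda_0$, this is dominated by $C_0 p^\beta e^{8\beta\vartheta\lambda}/r^\beta\cdot\sum f^{2p}\mu$, as required.

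The main obstacle is the \emph{joint} calibration of $n$ and $\lambda_0$. The integer $n$ must inflate at least like $\lambda p\,e^{\vartheta\lambda}$ in order that the $1/n^2$ factor in \hyperlink{csj}{$\operatorname{CSJ}(\beta)$} defeat the $e^{2\vartheta\lambda}$ loss produced by Davies' twist and permit absorption of the $\Gamma(f^p,f^p)$-type error; yet this same inflation feeds back into the additive error through $G_\beta(n)$. Only the smallness of $\vartheta=1/(8(\df+\beta))$ simultaneously keeps $\osc(\phi,\vartheta r)$ small (so that the Davies cost is $e^{2\vartheta\lambda}$ rather than $e^{\lambda}$) and leaves enough slack to reach the target exponent $8\beta\vartheta\lambda$ in the final bound. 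A secondary technical point is that Lemma~\ref{l-leib} naturally produces $\Gamma_{(\vartheta r)}(f^p,f^p)$ on the positive side, so the long-jump tail estimate above is required to upgrade this to the untruncated $\Gamma(f^p,f^p)$ demanded by \eqref{e-cfn1}.
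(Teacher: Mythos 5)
Your proposal is correct and follows essentially the same route as the paper's proof: apply Lemma~\ref{l-leib} to the $\vartheta r$-truncation of $K$, bound $\osc(\phi,\vartheta r)$ via the $1/n$-closeness of $\phi$ to $\psi$, choose $n$ of size roughly $p\lambda e^{O(\vartheta\lambda)}$ so that the $1/n^2$ gain in \hyperlink{csj}{$\operatorname{CSJ}(\beta)$} absorbs the Davies cost $e^{O(\vartheta\lambda)}$, and control both the long-jump truncation residue and the remaining $\G_\beta(n)$ error by $C_0p^\beta e^{8\beta\vartheta\lambda}/r^\beta$. The only (immaterial) deviation is a slightly sharper bookkeeping of the oscillation, leading to $n\asymp\lambda p e^{\vartheta\lambda}$ instead of the paper's $n\asymp\lambda p e^{3\vartheta\lambda}$.
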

\begin{proof}
By Lemma \ref{l-leib}, for any cutoff function $\phi$ for $B(x,r)\subset B(x,2r)$, for all $p \ge 1$, for all $\lambda >0$,  and for all non-negative $f \in L^2(X,\mu)$ we have
\begin{align}\label{e-cn1}
\MoveEqLeft{  \sum_{y \in X} \Gamma_{(\vartheta r)}(e^{-\lambda \phi} f, e^{\lambda \phi} f^{2p-1})} \nonumber \\
& \ge  \frac{1}{2p} \sum_{y \in X} \Gamma_{(\vartheta r)}(f^p,f^p)(y) - 9p \lambda^2 e^{2 \lambda \osc(\phi,\vartheta r)} \sum_{y \in X} f^{2p}(y)\Gamma(\phi,\phi)(y).
\end{align}
Using  \ref{j-beta} and a similar computation as \cite[Equation (33)]{MS1}, there exists $C_2>0$ such that
\begin{equation} \label{e-cn2}
\sum_{y \in X} \left( \Gamma(f^p,f^p)(y)-\Gamma_{(\vartheta r)}(f^p,f^p)(y) \right)\le C_2 r^{-\beta} \sum_{y \in X} f^{2p}(y)\mu(y)
\end{equation}
for all $r \ge 1$, for all $p \ge 1$ and for all $f \in L^2(X,\mu)$.
By \hyperlink{csj}{$\operatorname{CSJ}(\beta)$}, there exists $C_3,C_4>1$ such that for any $n \in \N^*$ there exists a cutoff function $\phi= \phi_n$ for $B(x,r) \subset B(x,2r)$ such that for all $f \in L^2(X,\mu)$ with $f \ge 0$, we have
\begin{equation} \label{e-cn3}
 \sum_{y \in X} f^2(y) \Gamma(\phi,\phi)(y) \le \frac{C_3}{n^2} \sum_{y \in X} \Gamma(f,f)(y)  + \frac{C_4 \G_\beta(n)}{r^\beta} \sum_{y \in X} f^2(y) \mu(y),
\end{equation}
where the function $\phi=\phi_n$ above satisfies
\begin{equation}\label{e-cn4}
\norm{\phi-\psi}_\infty \le n^{-1}.
\end{equation}
We make the choice
\begin{equation} \label{e-nc}
n= \left\lceil 6 p \lambda \exp(3 \lambda \vartheta) \sqrt{C_3} \right\rceil
\end{equation}
 and $\lambda_0 \ge 1$ such that $n \ge \vartheta^{-1}$ for all $\lambda \ge \lambda_0$ and for all $p \ge 1$.
We will verify $\phi$ satisfies the desired properties \eqref{e-cfn1} and \eqref{e-cfn2}.
 Using \eqref{e-nc} and \eqref{e-cn4}, we immediately have \eqref{e-cfn2}.
By \eqref{e-cn4}, triangle inequality, $n \ge \vartheta^{-1}$ and by the fact that $\psi$ is $r^{-1}$-Lipschitz,  we have
\begin{equation}
\label{e-cn5}
\osc(\phi,\vartheta r) \le \osc(\psi,\vartheta r) + 2 n^{-1} \le  \vartheta  + 2 \vartheta = 3 \vartheta.
\end{equation}

By \eqref{e-nc}, \eqref{e-cn3}, \eqref{e-cn5}, we have
\begin{align} \label{e-cn6}
\MoveEqLeft{9 p \lambda^2 e^{2 \lambda \osc(\phi,\vartheta r)} \sum_{y \in X} f^{2p}(y) \Gamma(\phi,\phi)(y)}\nonumber \\
& \le \frac{1}{4p} \sum_{y \in X} \Gamma(f^p,f^p)(y)  + \frac{9 C_4 p \lambda^2 e^{6 \vartheta \lambda} \G_\beta(n)}{r^\beta} \sum_{y \in X} f^{2p}(y) \mu(y),
\end{align}
for all non-negative $f \in L^2(X,\mu)$. We use $n \le 12 p \lambda e^{3 \lambda \vartheta} \sqrt{C_3}$, $\lambda \le \vartheta^{-1} e^{\vartheta \lambda}$ and  $\G_\beta(n) \le n^{\beta -1}$ for all $n \in \N^*$ to obtain $C_5 >0$ (that depends on $\vartheta,\beta$) such that
\begin{equation} \label{e-cn7}
9 C_4 p \lambda^2 e^{6 \vartheta \lambda} \G_\beta(n) \le 9 C_4 p \lambda^2 e^{6 \vartheta \lambda} (12 p \lambda e^{3 \lambda \vartheta} \sqrt{C_3})^{\beta-1} \le C_5 p^\beta e^{ 4(\beta+1) \lambda \vartheta} \le  C_5 p^\beta e^{ 8\beta \lambda \vartheta}
\end{equation}
for all $p,n \ge 1$ and for all $\lambda > \lambda_0$.
Combining \eqref{e-cn1}, \eqref{e-cn2}, \eqref{e-cn6} and \eqref{e-cn7}, we obtain \eqref{e-cfn1}.
\end{proof}
We need the following Nash inequality to obtain off-diagonal estimates using Davies' method.
\begin{prop}[Nash inequality] \label{p-nash}
Let $(X,d,\mu)$ be a vertex weighted graph satisfying satisfies \ref{df} with volume growth exponent $\df$.  Let $K$ be a $\mu$-symmetric Markov operator whose kernel $k$ satisfies \ref{j-beta} for some $\beta > 0$. Let $\E(\cdot,\cdot)$ denote the corresponding Dirichlet form. Then there exists $C_N>0$ such that
\begin{equation} \label{e-nash}
\norm{f}_2^{2(1+ \beta/\df)} \le C_N \E(f,f) \norm{f}_1^{2\beta/\df}
\end{equation}
for all $f \in L^1(X,\mu)$, where $\norm{\cdot}_{p}$ denotes the $L^p(X,\mu)$ norm.
\end{prop}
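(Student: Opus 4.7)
The plan is to derive the Nash inequality directly from the pointwise lower bound in \ref{j-beta} and the volume bound in \ref{df}, by the standard truncate-and-optimize argument for jump processes (cf.\ \cite{BL,MS1}). Since $\E(\abs{f},\abs{f}) \le \E(f,f)$ while $\norm{\cdot}_1,\norm{\cdot}_2$ are invariant under $f \mapsto \abs{f}$, I may assume $f \ge 0$ throughout.

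First I fix an auxiliary scale $r \ge 1$ and restrict to short jumps: by \ref{j-beta} there exists $c_1>0$ with $k(x,y) \ge c_1 r^{-(\df+\beta)}$ whenever $d(x,y) \le r$, so
$$\E(f,f) \ge \frac{c_1}{r^{\df+\beta}} \cdot \frac{1}{2}\sum_{x,y:\, d(x,y) \le r}(f(x)-f(y))^2 \mu(x)\mu(y).$$
Expanding the square, using the symmetry of the distance constraint, and invoking the volume lower bound $V(x,r) \ge C_V^{-1} r^\df$ from \ref{df}, the inner sum is bounded below by
$$\sum_{x} f(x)^2 \mu(x) V(x,r) - \sum_{x,y:\, d(x,y) \le r} f(x)f(y) \mu(x)\mu(y) \ge C_V^{-1} r^\df \norm{f}_2^2 - \norm{f}_1^2,$$
where the $\norm{f}_1^2$ estimate comes from dropping the distance restriction in the cross term, which is permissible since $f \ge 0$. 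Combining the two bounds gives
$$\E(f,f) \ge \frac{c_1 C_V^{-1}}{r^\beta}\norm{f}_2^2 - \frac{c_1}{r^{\df+\beta}}\norm{f}_1^2. \tag{$\ast$}$$

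To finish I optimize in $r$ by taking $r = \bigl(2C_V\norm{f}_1^2/\norm{f}_2^2\bigr)^{1/\df}$, which is tuned so that the positive term in $(\ast)$ dominates the negative one by a factor of two. When this $r \ge 1$, direct substitution yields $\E(f,f) \ge c \norm{f}_2^{2(1+\beta/\df)}\norm{f}_1^{-2\beta/\df}$, which rearranges to the claimed inequality. The only delicate case is $r < 1$, corresponding to highly concentrated $f$; here \eqref{e-count} gives $\norm{f}_\infty \le C_\mu \norm{f}_1$, hence $\norm{f}_2^2 \le \norm{f}_\infty \norm{f}_1 \le C_\mu \norm{f}_1^2$, so $\norm{f}_1$ and $\norm{f}_2$ are comparable; evaluating $(\ast)$ at $r = 1$ then gives $\E(f,f) \ge c' \norm{f}_2^2$, from which the Nash inequality follows immediately. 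I do not anticipate a genuine obstacle: the argument is uniform in $f$, and only the boundary treatment of very concentrated functions requires the small extra input from \eqref{e-count}.
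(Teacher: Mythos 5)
Your proof is correct, and it takes a genuinely different route from the one in the paper. The paper follows the pseudo-Poincar\'{e}/mollification strategy of \cite{BCLS}: it introduces the local average $f_r(x) = V(x,r)^{-1}\sum_{y \in B(x,r)} f(y)\mu(y)$, proves the two estimates $\norm{f_r}_2 \lesssim r^{-\df/2}\norm{f}_1$ (an $L^1\to L^2$ smoothing bound via \ref{df}) and $\norm{f-f_r}_2^2 \lesssim r^\beta\E(f,f)$ (a pseudo-Poincar\'{e} inequality via Jensen and the lower bound in \ref{j-beta}), and concludes by the triangle inequality $\norm{f}_2 \le \norm{f_r}_2 + \norm{f-f_r}_2$ and optimization in $r$. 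You instead bound the Dirichlet form from below directly: expand $(f(x)-f(y))^2$, keep only jumps of length $\le r$, control the diagonal term with the volume lower bound $V(x,r) \gtrsim r^\df$, and crudely majorize the nonnegative cross term by $\norm{f}_1^2$. Both reach the same additive inequality relating $\norm{f}_2$, $\norm{f}_1$, $\E(f,f)$, and $r$, which is then optimized identically. Your route is a bit more elementary and self-contained (no auxiliary averaging operator, no Jensen), at the price of requiring the reduction to $f\ge 0$ — which you correctly justify via $\E(\abs{f},\abs{f})\le\E(f,f)$ — and the separate treatment of the regime where the optimal $r$ drops below $1$, which you also handle correctly (the case can only arise if $C_\mu > 2C_V$, and there $\norm{f}_1\asymp\norm{f}_2$ plus $(\ast)$ at $r=1$, together with $\norm{f}_1^2 < \norm{f}_2^2/(2C_V)$, gives $\E(f,f)\gtrsim\norm{f}_2^2$). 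The paper's route sidesteps both of these small boundary issues at the cost of proving the pseudo-Poincar\'{e} step; neither is clearly superior, and both are standard.
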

\begin{proof}
The proof of the Nash inequality \eqref{e-nash} is essentially contained in \cite[p.~1064]{BCLS}.
We repeat the proof for convenience.

For $r > 0$ and $f \in L^1(X,\mu)$, we denote by $f_r: X \to \R$ the $\mu$-weighted average
\[
f_r(x) = \frac{1}{V(x,r)} \sum_{y \in B(x,r)} f(y) \mu(y).
\]
We first bound $\norm{f_r}_1$. There exists $C_V>0$ such that
\begin{align}\label{e-ns1}
\norm{f_r}_1 &= \sum_{x \in X} \abs{f_r(x)} \mu(x) \nonumber \\
& \le \sum_{x \in X} \sum_{y \in B(x,r)} \abs{f(y)} V(x,r)^{-1} \mu(y) \mu(x) = \sum_{y \in X} \abs{f(y)} \mu(y) \sum_{x \in B(y,r)}V(x,r)^{-1}  \mu(x) \nonumber\\
& \le C_V^2 \norm{f}_1
\end{align}
for all $f \in L^1(X,\mu)$ and for all $r > 0$. The second line above follows from triangle inequality and Fubini's theorem and the last line follows from \ref{df}.
By \ref{df}, there exists $C_1>0$ such that
\begin{equation}\label{e-ns2}
\norm{f_r}_\infty \le \norm{f}_1 \sup_{x \in X} V(x,r)^{-1} \le C_1 r^{-\df} \norm{f}_1
\end{equation}
$f \in L^1(X,\mu)$ and for all $r > 0$.
Combining \eqref{e-ns1} and \eqref{e-ns2}, there exists $C_2>0$ such that
\begin{equation}\label{e-ns3}
\norm{f_r}_2 \le \norm{f_r}_1^{1/2} \norm{f_r}_\infty^{1/2}  \le C_2 r^{-\df/2} \norm{f}_1
\end{equation}
for all $f \in L^1(X,\mu)$ and for all $r > 0$.

There exists $C_3 >0$ such that  for all $f \in L^1(X,\mu)$ and for all $r > 0$, we have
\begin{align}\label{e-ns4}
\norm{f-f_r}_2^2 &= \sum_{x \in X} \abs{f(x)- f_r(x)}^2 \mu(x)\nonumber\\
& \le \sum_{x \in X}\frac{1}{V(x,r)} \sum_{y \in B(x,r)}  (f(x)-f(y))^2   \mu(y) \mu(x)\nonumber \\
& \le C_1 \sum_{x,y \in X} (f(x)-f(y))^2 \one_{\set{d(x,y) \le r}} r^{-\df} \mu(x) \mu(y)\nonumber\\
& \le 2^{\df + \beta} C_1 r^\beta \sum_{x,y \in X} (f(x)-f(y))^2 \frac{1}{(1+d(x,y))^{\df+\beta}} \mu(x) \mu(y) \nonumber \\
& \le \frac{C_3 r^\beta}{2} \sum_{x,y \in X} (f(x)-f(y))^2 k(x,y) \mu(x) \mu(y) = C_3 r^\beta \E(f,f).
\end{align}
The second line above follows from Jensen's inequality, the third line follows from an application of  \ref{df} similar to \eqref{e-ns2}
 and the last line follows from \ref{j-beta} and Lemma \ref{l-folk}(a).

 By triangle inequality, \eqref{e-ns3} and \eqref{e-ns4}, we have
 \begin{equation}\label{e-ns5}
 \norm{f}_2 \le \norm{f_r}_2 + \norm{f-f_r}_2 \le C_2 r^{-\df/2} \norm{f}_1 + C_3^{1/2} r^{\beta/2} \E(f,f)^{1/2}
 \end{equation}
 for all $f \in L^1(X,\mu)$ and all $r>0$. The Nash inequality \eqref{e-nash} follows from \eqref{e-ns5} and the choice
 \[
 r = \left(\frac{\norm{f}_1^2}{\E(f,f)}\right)^{1/(\df+\beta)}.
 \]
\end{proof}

The following lemma is analogous to \cite[Lemma 3.21]{CKS} but the statement and its proof is slightly modified to suit our context.
\begin{lemma}\label{l-dife}
Let $w:[0,\infty) \to (0,\infty)$ be a  non-decreasing function and suppose that $u \in C^1([0,\infty); (0,\infty))$ satisfies
\begin{equation} \label{e-cks}
 u'(t) \le - \frac{\epsilon}{p} \left( \frac{t^{(p-2)/\theta p}}{w(t)}\right)^{\theta p } u^{1+\theta p}(t) + \delta p^{\beta}u(t)
\end{equation}
for some positive $\epsilon, \theta$ and $\delta$, $\beta \in [2,\infty)$ and $p=2^k$ for some $k \in \N^*$. Then  $u$ satisfies
\begin{equation} \label{e-fs} 
 u(t) \le  \left( \frac{2 p^{\beta+1} }{\epsilon \theta} \right)^{1/\theta p} t^{(1-p)/\theta p} w(t)  e^{\delta t/p}.
\end{equation}
\end{lemma}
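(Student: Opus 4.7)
The plan is to reduce this nonlinear differential inequality to a linear one by a pair of standard substitutions. First, I would absorb the source term $+\delta p^\beta u$ via an integrating factor: set $v(t) = u(t) e^{-\alpha t}$ for a suitable $\alpha > 0$, so that $v$ satisfies a pure Bernoulli-type inequality $v'(t) \le -c(t,p,\theta,\alpha)\, v(t)^{1+\theta p}$ with no linear term, the coefficient picking up an extra exponential factor $e^{\alpha \theta p t}$ from the substitution.

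Next I would linearize by setting $y(t) = v(t)^{-\theta p}$. Since $y' = -\theta p\, v^{-\theta p-1} v'$, the Bernoulli-type bound on $v'$ converts to a linear lower bound of the form $y'(t) \ge \epsilon \theta\, t^{p-2} e^{\alpha \theta p t}/w(t)^{\theta p}$. I would then integrate from $0$ to $t$ and drop the nonnegative term $y(0)$, which weakens the inequality in the correct direction. Using that $w$ is non-decreasing, the pointwise bound $1/w(s)^{\theta p} \ge 1/w(t)^{\theta p}$ for $s \le t$ lets me pull $w(t)^{\theta p}$ outside the integral; together with $e^{\alpha \theta p s} \ge 1$ the remaining $\int_0^t s^{p-2}\,ds = t^{p-1}/(p-1)$ gives $y(t) \ge \epsilon \theta\, t^{p-1}/[(p-1) w(t)^{\theta p}]$.

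Finally I would invert the substitutions. Raising to the $-1/(\theta p)$ power recovers the factors $t^{(1-p)/\theta p}$ and $w(t)$, and the constant $((p-1)/(\epsilon\theta))^{1/\theta p}$, which is absorbed into the claimed $(2 p^{\beta+1}/(\epsilon\theta))^{1/\theta p}$ using $p - 1 \le p \le p^{\beta+1}$ for $\beta \ge 2$; multiplying by $e^{\alpha t}$ then produces the stated exponential factor for the appropriate choice of $\alpha$.

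The main obstacle will be tracking the constants and exponents precisely. In particular, obtaining the exponent $\delta t/p$ in the exponential factor, rather than the much larger $e^{\delta p^\beta t}$ one naively obtains from a pure integrating factor with $\alpha = \delta p^\beta$, requires more careful handling of the source term --- likely a Young-type absorption of $\delta p^\beta u$ into the negative Bernoulli term $\tfrac{\epsilon}{p} W(t) u^{1+\theta p}$, paid for by a larger polynomial prefactor that is still comfortably controlled by $p^{\beta+1}$. The restriction $p = 2^k$ plays no role in this ODE computation itself, but will be essential when the lemma is iterated along a geometric sequence of exponents in the Davies iteration of Section \ref{s-dav}.
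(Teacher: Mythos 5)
Your substitutions $v = u e^{-\alpha t}$ and $y = v^{-\theta p}$, and the subsequent integration dropping $y(0)\ge 0$ and pulling $w(t)$ out via monotonicity, match the paper's strategy (with $\alpha = \delta p^{\beta}$). The gap is precisely where you flag it, and your proposed repair does not close it. After the substitutions one arrives at
\[
e^{\delta\theta p^{\beta+1}t}\,u(t)^{-\theta p}\;\ge\;\epsilon\theta\,w(t)^{-\theta p}\int_0^t s^{p-2}e^{\delta\theta p^{\beta+1}s}\,ds,
\]
and if you then discard the exponential in the integrand via $e^{\delta\theta p^{\beta+1}s}\ge 1$ to get $\int_0^t s^{p-2}\,ds=t^{p-1}/(p-1)$, the $e^{\delta\theta p^{\beta+1}t}$ on the left survives in full, and raising to the $-1/(\theta p)$ power produces $e^{\delta p^{\beta}t}$, not $e^{\delta t/p}$. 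This is not a constant-tracking nuisance: in Proposition \ref{p-offd} the lemma is iterated over $p_k=2^k$, and the product $\prod_k e^{\delta t/p_k}$ converges precisely because the exponents sum, whereas $\prod_k e^{\delta p_k^{\beta}t}$ diverges and the Davies iteration collapses. Your fallback --- a Young-type absorption of $\delta p^{\beta}u$ into the Bernoulli term --- cannot work uniformly in $t$, because the Bernoulli term carries the prefactor $t^{p-2}$ which vanishes at $t=0$; near $t=0$ there is nothing to absorb into.

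The paper instead keeps the exponential in the integrand and localizes: restricting to the window $s\in[t(1-p^{-\beta-1}),t]$ and substituting $y=\delta\theta p^{\beta+1}s/t$ gives
\[
\int_0^t s^{p-2}e^{\delta\theta p^{\beta+1}s}\,ds
\;\ge\; \frac{t^{p-1}}{2p^{\beta+1}}\,
\exp\bigl(\delta\theta p^{\beta+1}t - \delta\theta t\bigr),
\]
so when one divides by $e^{\delta\theta p^{\beta+1}t}$ the cancellation is nearly complete and only $e^{-\delta\theta t}$ remains; the $-1/(\theta p)$ power then yields $e^{\delta t/p}$ and the polynomial loss $2p^{\beta+1}$, which is exactly the constant in \eqref{e-fs}. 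This localization of the integral near $s=t$ is the one genuinely new idea in the proof and is what your write-up is missing.
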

\begin{proof}
 Set $v(t) = e^{- \delta p^{\beta}t} u(t)$. By \eqref{e-cks}, we have
 \[
  v'(t)= e^{- \delta p^{\beta}t} \left( u'(t) - \delta p^{\beta} u(t) \right) \le - \frac{\epsilon t^{p-2}}{p w(t)^{\theta p}} e^{\theta \delta p^{\beta+1}t} v(t)^{1+\theta p}.
 \]
Hence
\[
 \frac{d}{dt} \left( v(t) \right)^{-\theta p} \ge \epsilon \theta t^{p-2} w(t)^{-\theta p} e^{\theta \delta p^{\beta+1}t}
\]
and so, since $w$ is non-decreasing
\begin{equation} \label{e-fs1}
 e^{ \delta \theta p^{\beta+1} t} u(t)^{-\theta p} \ge \epsilon \theta w(t)^{-\theta p} \int_0^t s^{(p-2)}  e^{\theta \delta p^{\beta+1}s} \, ds.
\end{equation}
Note that
\begin{eqnarray}
\nonumber \int_0^t s^{(p-2)}  e^{\theta \delta p^{\beta+1}s} \, ds &\ge& (t/\delta \theta p^{\beta+1})^{p-1} \int_{\delta \theta p^{\beta+1}(1- 1/p^{\beta+1})}^{\delta \theta p^{\beta+1}} y^{(p-2)} e^{ty}\, dy \\
\nonumber &\ge & \frac{t^{p-1}}{p-1} \exp \left( \delta \theta p^{\beta+1} t -  \delta \theta t \right) \left[ 1 - (1- p^{-\beta-1})^{p-1} \right] \\
 \label{e-fs2} &\ge & \frac{t^{p-1}}{2p^{\beta+1}} \exp \left( \delta \theta p^{\beta+1} t - \delta \theta  t \right).
\end{eqnarray}
In the last line above, we used the bound $1- (1-p^{-\beta-1})^{p-1} \ge \frac{1}{2}p^{-\beta-1}(p-1)$ for all $p,\beta \ge 2$.
Combining \eqref{e-fs1} and \eqref{e-fs2} yields \eqref{e-fs}.
\end{proof}
We now have all the necessary ingredients to prove Proposition \ref{p-offd}.
\begin{proof}[Proof of Proposition \ref{p-offd}]
Let $H_t:= \exp( t(K-I))$ denote the corresponding continuous time semigroup. Note that $H_t$ is $\mu$-symmetric and let $h_t$ be the kernel of $H_t$ with respect to $\mu$.
As explained in \cite[Remark 3]{MS1}, by \cite[Theorem 3.6]{Del} it suffices to show the following bound on $h_t$: there exists $C_1>0$ such that
\begin{equation} \label{e-od1}
h_t(x,y) \le C_1 \min \left( \frac{1}{t^{\df/\beta}}, \frac{t}{(d(x,y))^{\df+\beta}} \right)
\end{equation}
for all $x,y \in X$ and for all $t \ge 1$. By \cite[Proposition II.1]{Cou} the Nash inequality \eqref{e-nash} in Proposition \ref{p-nash} implies that there exists $C_2>0$ such that
\begin{equation} \label{e-od2}
h_t(x,y) \le \frac{C_2 }{t^{\df/\beta}}
\end{equation}
for all $x,y \in X$ and for all $t \ge 1$.

Let $t \ge 1$ and let $x, y \in X$ such that $d(x,y) \ge 2$. Let $\vartheta= 1/(8(\df+\beta))$ and $\lambda_0,C_0 \ge 1$ be constants given by Lemma \ref{l-cutfun}.
We define parameters $L$ and $r$ that depend only on $x$ and $y$ as
\begin{equation} \label{e-defod}
r=d(x,y)/2, L = \vartheta r.
\end{equation}
 Let $\lambda \ge \lambda_0$ be arbitrary.
 Let $\Gamma$ denote the energy measure corresponding to $K$ and let $\Gamma_{(L)}$ denote its truncation.
Define the truncated Dirichlet form
\[
\E_L(f,f)= \sum_{y \in X} \Gamma_{(L)}(f,f)(y)
\]
for all $f \in L^2(X,\mu)$. Let $h^{(L)}_t$ denote the  continuous time kernel with respect to $\mu$ for the corresponding jump process and let $H^{(L)}_t$ denote the corresponding Markov semigroup. Let $p_k=2^k$ for $k \in \N$ and let $\psi_k= \lambda \phi_{p_k,\lambda}$ where $\phi_{p_k,\lambda}$ is the cutoff function for $B(x,r) \subset B(x,2r)$ given by  Lemma \ref{l-cutfun}.
Define the `perturbed semigroup'
\begin{equation} \label{e-od3}
H_t^{L,\psi_k}f(x) = e^{\psi_k(x)} \left(H_t^{(L)} \left( e^{-\psi_k} f \right)\right)(x).
\end{equation}

We now pick $f \in L^2(X,\mu)$ and $f \ge 0$ with $\norm{f}_2=1$ and define
\begin{equation} \label{e-fdef}
 f_{t,k}:= H_t^{L,\psi_{k} } f
\end{equation}
for all $k \in \N$, and $P_t^{\psi_{k} }$ denotes the perturbed semigroup defined in \eqref{e-od3}.  We remark that the constants below do not depend on the choice of $x,y \in X$ with $d(x,y) \ge 2$  or the choice of $f \in L^2(X,\mu)$,  $t \ge 1$ or $\lambda \ge \lambda_0$.

Using Lemma \ref{l-cutfun}, we have
\begin{align}
\nonumber \frac{d}{dt} \norm{f_{t,0}}_{2}^2 =& -2 \E_L\left(e^{\psi_0} f_{t,0}, e^{- \psi_0 } f_{t,0}\right) \\
\le& 2 C_0 \frac{ e^{8 \beta \vartheta \lambda} }{r^\beta} \norm{f_{t,0}}_2^2  \label{e-sr1}
\end{align}
and
\begin{align}
\nonumber \frac{d}{dt} \norm{f_{t,k}}_{2p_{k}}^{2p_k} &= -2 p_k \E_L\left(e^{\psi_k}  f_{t,k}^{2p_k-1}, e^{-\psi_k }  f_{t,k} \right) \\
&\le - \frac{1}{2}\E \left( f_{t,k}^{p_k}, f_{t,k}^{p_k}\right) + 2 C_0 \frac{e^{8 \beta \vartheta \lambda} p_k^{\beta+1}}{r^\beta} \norm{f_{t,k}}_{2 p_k}^{2 p_k} \label{e-sr2}
\end{align}
for all $k \in \N^*$.
By solving \eqref{e-sr1}, we obtain
\begin{equation}
 \label{e-sr3} \norm{f_{t,0}}_{p_1} =\norm{f_{t,0}}_2 \le \exp\left(  C_0  e^{8 \beta \vartheta \lambda} t /r^\beta \right) \norm{f}_2= \exp\left(  C_0  e^{8 \beta \vartheta \lambda} t /r^\beta \right).
\end{equation}
Using \eqref{e-sr2} and Nash inequality \eqref{e-nash}, we obtain
\begin{equation}
\label{e-sr4} \frac{d}{dt} \norm{f_{t,k}}_{2p_{k}} \le - \frac{1}{4 C_N p_k} \norm{f_{t,k}}_{2 p_k}^{1+ 2 \beta p_k /\df }\norm{f_{t,k}}_{p_k}^{-2 \beta p_k/\df} + C_0 p_k^{\beta} \frac{ e^{8 \beta \vartheta \lambda}}{r^\beta}\norm{f_{t,k}}_{2p_k}
\end{equation}
for all $k \in \N^*$. By \eqref{e-cfn2}, we have $\norm{\psi_k -\psi_{k-1}}_\infty \le 3/ p_k$. This along the fact that $H_t^{(L)}$ is a contraction on $L^\infty$ yields
\begin{equation} \label{e-sc}
  \exp(-6/p_k) f_{t,k-1}  \le f_{t,k} \le \exp(6/p_k) f_{t,k-1}
\end{equation}
for all $k  \in \N^*$.
Combining \eqref{e-sr4} and \eqref{e-sc}, we obtain
\begin{equation}
 \label{e-sr5}  \frac{d}{dt} \norm{f_{t,k}}_{2p_{k}} \le - \frac{1}{C_A p_k} \norm{f_{t,k}}_{2 p_k}^{1+ 2 \beta p_k /\df }\norm{f_{t,k-1}}_{p_k}^{-2 \beta p_k/\df} + C_0 p_k^{\beta} \frac{e^{8 \beta \vartheta \lambda}}{r^\beta}\norm{f_{t,k}}_{2p_k}
\end{equation}
for all $k \in \N^*$, where $C_A= 4 C_N \exp ( 12 \beta /\df)$.

Let $u_{k}(t)= \norm{f_{t,k-1}}_{p_{k}}$  and  let  \[w_k(t)= \sup \{ s^{ \df(p_k-2)/(2 \beta p_k)} u_k(s) : s \in (0,t] \}.\]
By \eqref{e-sr3}, $w_1(t) \le \exp(C_0e^{8 \beta \vartheta \lambda}  t /r^\beta)$. Further by \eqref{e-sr5}, $u_{k+1}$ satisfies \eqref{e-cks} with
$\epsilon = 1/C_A$, $\theta = 2 \beta/\df$, $\delta = C_0 (e^{8 \beta \vartheta \lambda}/r^\beta)$, $w=w_k$, $p=p_k$. Hence by Lemma \ref{l-dife},
\[
 u_{k+1}(t) \le ( 2^{(\beta+1) k +1}/\epsilon \theta)^{1/(\theta p_k)} t^{(1-p_k)/\theta p_k} e^{\delta t/p_k} w_k(t).
\]
Therefore
\[
 w_{k+1}(t)/w_k(t) \le ( 2^{(\beta+1) k +1}/\epsilon \theta)^{1/(\theta 2^k)} e^{\delta t/2^k}
\]
for $k \in \N^*$. Hence, we obtain
\[
 \lim_{k \to \infty} w_{k}(t) \le   C_3 e^{\delta t}   w_1(t) \le C_2 \exp(2 C_0 e^{8 \beta \vartheta \lambda} t/r^\beta )
\]
where $C_3 =C_3( \beta, \epsilon, \theta)> 0$.
Hence
\[
 \lim_{k \to \infty} u_k(t) = \norm{ H_t^{L,\psi_\infty} f}_\infty \le  \frac{C_2}{t^{\df/2\beta}} \exp(2 C_0 e^{8 \beta \vartheta \lambda} t/r^\beta ).
\]
where $\psi_\infty=\lim_{k \to \infty} \psi_k$.
Since the above bound holds for all non-negative $f \in L^2(X,\mu)$, we obtain
\[
 \norm{H_t^{L,\psi_\infty}  }_{2 \to \infty} \le  \frac{C_3}{t^{\df/2\beta}} \exp(2 C_0 e^{8 \beta \vartheta \lambda} t/r^\beta ).
\]
The estimate is unchanged if we replace $\psi_k$'s by $-\psi_k$. Since $H_t^{L,-\psi}$ is the adjoint of $H_t^{L,\psi}$, by duality we have that for all $t >0$
\[
 \norm{H_t^{L,\psi_\infty}}_{1\to 2} \le  \frac{C_3}{t^{\df/2\beta}} \exp(2 C_0 e^{8 \beta \vartheta \lambda} t/r^\beta ).
\]
Combining the above, we have
\begin{equation*}
 \norm{H_t^{L,\psi_\infty} }_{1\to \infty} \le  \frac{C_3 2^{\df/\beta}}{t^{\df/\beta}} \exp(2 C_0e^{8 \beta \vartheta \lambda} t/r^\beta ).
\end{equation*}
 for all $\lambda \ge \lambda_0$, for all $x,y \in M$ with $d(x,y) \ge 2$,  where $L$ is as defined in \eqref{e-defod}.
Therefore
\begin{equation} \label{e-sr6}
 h_t^{(L)}(x,y) \le  \frac{C_2 2^{\df/\beta}}{t^{\df/\beta}} \exp(2 C_0 e^{8 \beta \vartheta \lambda} t/r^\beta + \psi_\infty(y) - \psi_\infty(x) )
\end{equation}
for all $x,y \in X$ satisfying $d(x,y) \ge 2$,  for all $t \ge 1$ and for all $\lambda \ge \lambda_0$.

Now we choose $\lambda$ as
\begin{equation} \label{e-defpar}
 \lambda = \frac{\df+\beta}{\beta} \log (r^\beta/t).
\end{equation}
 By \eqref{e-od2}, it suffices to show \eqref{e-od1} for the case $d(x,y) \ge 2$ and $\lambda \ge \lambda_0$.
By the choices in \eqref{e-defod}, we have $\psi_\infty(y) - \psi_\infty(x)= - \lambda$ and hence by  \eqref{e-sr6}, we obtain
\begin{equation}\label{e-sr7}
 h_t^{(L)}(x,y) \le  \frac{C_2 2^{\df/\beta}}{t^{\df/\beta}} \exp(2 C_0 e^{8 \beta \vartheta \lambda} t/r^\beta - \lambda ) =  \frac{C_2 2^{\df/\beta}}{t^{\df/\beta}} \exp(2 C_0) \frac{t^{1+(\df/\beta)}}{r^{\df+\beta}}.
\end{equation}
By \eqref{e-sr7}, \eqref{e-defod} along with Meyer decomposition bound in \cite[Lemma 3.1]{BGK}, there exists $C_4,C_5>0$ such that for all $x,y \in X$ and for all $t \ge 1$ satisfying $d(x,y)^\beta \ge C_5 t$, we have
\[
h_t(x,y) \le \frac{C_4 t}{ (d(x,y))^{\df+\beta}}.
\]
The above equation along with \eqref{e-od2} yields \eqref{e-od1} which in turn implies \hyperlink{uhkp}{$\operatorname{UHKP}(\df,\beta)$}.
\end{proof}
We will now prove the main results stated in the introduction.
\begin{proof}[Proof of Theorem \ref{t-main3}]
Proposition \ref{p-offd} and Proposition \ref{p-csj} are the two desired implications.
\end{proof}
\begin{proof}[Proof of Corollary \ref{c-main2}]
Let $K_1$ satisfy \ref{hkp}. Then by Theorem \ref{t-main3},  we have that $K_1$ satisfies the cutoff Sobolev inequality \hyperlink{csj}{$\operatorname{CSJ}(\beta)$}.
Since two $\mu$-symmetric Markov operators satisfying \ref{j-beta} have comparable energy measures,  $K_2$ satisfies \hyperlink{csj}{$\operatorname{CSJ}(\beta)$}.
Therefore by Theorem \ref{t-main3},  $K_2$ satisfies \hyperlink{uhkp}{$\operatorname{UHKP}(\df,\beta)$}.
The upper bounds \hyperlink{uhkp}{$\operatorname{UHKP}(\df,\beta)$}, \eqref{e-od1}, and the lower bound of the kernel in \ref{j-beta} are sufficient to show the matching lower bounds \hyperlink{lhkp}{$\operatorname{LHKP}(\df,\beta)$} using an iteration argument due to Bass and Levin \cite{BL}. The argument in \cite[Sections 4 and 5]{MS1} can be directly adapted to this setting. Therefore $K_2$ satisfies \ref{hkp}.
\end{proof}
\subsection{Applications} \label{ss-example}

The following transition probability estimate is the main application of our stability result and provides several examples.
\begin{theorem}\label{t-main}
Let $(X,d,\mu)$ be a vertex weighted graph satisfying \ref{df} with volume growth exponent $\df$. Let $(S_n)_{n \in \mathbb{N}}$ denote the simple random on $X$ with transition probability $P_n(x,y)= \mathbb{P}\left(X_n=y \hspace{1mm}\vline\hspace{1mm} X_0=x\right)$. Suppose that the transition probability $P_n$ satisfies the following sub-Gaussian estimates \ref{usg} and \ref{lsg} with walk dimension $\dw$: 
there exists constants $c,C>0$ such that, for all $x,y \in X$
\begin{equation}\label{usg}
 P_n(x,y) \le \frac{C}{n^{\df/\dw}} \exp \left[ - \left( \frac{d(x,y)^\dw}{Cn} \right)^{\frac{1}{\dw-1}} \right], \forall n \ge 1; \tag*{$\operatorname{USG}(\df,\dw)$}
\end{equation}
and
\begin{equation}\label{lsg}
(P_n+P_{n+1})(x,y) \ge \frac{c}{n^{\df/\dw}}  \exp \left[ - \left( \frac{d(x,y)^\dw}{cn} \right)^{\frac{1}{\dw-1}} \right], \forall n \ge 1 \vee d(x,y). \tag*{$\operatorname{LSG}(\df,\dw)$}
\end{equation} Let $K$ be a $\mu$-symmetric Markov operator whose kernel $k=k_1$ satisfies \ref{j-beta} for some $\beta \in [2,\dw)$.
Then the corresponding iterated kernel $k_n$ satisfies \ref{hkp}.
\end{theorem}
\begin{remark}
\begin{enumerate}[(a)]
\item For the case $\beta \in (0,2)$, \ref{hkp} follows from \ref{df} and \ref{j-beta} (see \cite[Theorem 1.1]{MS1}).
In this case there is no assumption required on the heat kernel of $(X,d,\mu)$.
\item For the case $\beta \ge \dw$, \ref{hkp} does not hold.
We refer the reader to \cite[Theorem 1.2]{MS2} for on-diagonal estimates.
 We do not know off-diagonal estimates in this case.
\item We obtained matching two sided estimates on $\sup_{x \in X} k_{2n}(x,x)$ in   \cite{MS2} for all $\beta > 0$. Due to Theorem \ref{t-main}, we have analogous pointwise on-diagonal lower bounds on $k_n(x,x)$ for all $x \in X$ for the case $\beta \in [2,\dw)$.
\item The hypothesis of sub-Gaussian estimate for the simple random walk can be generalized to any random walk satisfying certain uniform ellipticity hypothesis -- see  \cite[Theorem 1.2]{MS2} for such a set up.
\item The application of Davies' method in literature (see for example \cite[Section 3.2]{BGK}) is inadequate to obtain \hyperlink{uhkp}{$\operatorname{UHKP}(\df,\beta)$} for the case $\beta \ge 2$. If $\beta >2$, the existing methods yields the off-diagonal upper bound corresponding to the off-diagonal estimate in  \hyperlink{uhkp}{$\operatorname{UHKP}(\df,2)$} and therefore not optimal by a factor of $(1+d(x,y))^{\beta-2}$. Even in the case $\beta = 2$, the existing method gives an off-diagonal upper bound that is not optimal by the logarithmic factor $\log(2+d(x,y))$.
\end{enumerate}
\end{remark}
\begin{proof}[Proof of Theorem \ref{t-main}]
By a known  subordination argument  (see \cite[Theorem 5.1]{MS2}), there exists a $\mu$-symmetric Markov operator satisfying \ref{j-beta} and \ref{hkp}.
The desired result then follows from Corollary \ref{c-main2}.
\end{proof}
In the remainder of this section, we will elaborate on a rich family of examples using Theorem \ref{t-main}.
\begin{example} \label{x-one}
For any $\df \in [1,\infty)$ and for any $\dw \in [2,\df+1]$,
Barlow constructs graphs that satisfy sub-Gaussian estimates \ref{usg} and \ref{lsg} -- see  \cite[Theorem 2]{Bar} and  \cite[Theorem 3.1]{GT}.
Moreover, these are the complete range of $\df$ and $\dw$ for which sub-Gaussian estimates \ref{usg} and \ref{lsg} could possibly hold for graphs.

Let us fix one such graph $(X,d)$ satisfying \ref{usg} and \ref{lsg} for some $\df \in [1,\infty),\dw \in (2,\df+1]$.
Consider any function $J:X \times X \to (0,\infty)$ satisfying 
\begin{align} \label{e-jb}
\frac{C^{-1}}{(1+d(x,y))^{\df+\beta}} \le J(x,y)&= J(y,x) \le \frac{C}{(1+d(x,y))^{\df+\beta}}\hspace{3mm} \forall x,y \in X. 
\end{align}
Any function $J$ satisfying \eqref{e-jb} defines a measure $\mu\left(\set{x} \right) = \sum_{y \in X} J(x,y)$ and a $\mu$-symmetric Markov operator $K$ with kernel 
$k(x,y)= J(x,y) \left( \mu(x) \mu(y) \right)^{-1}$. If $\beta \in [2,d_w)$, we obtain \ref{hkp} for the Markov operator $K$. 

To compare with some earlier works in the continuous time case, let us point out that we obtain continuous time heat kernel estimates corresponding to \hyperlink{uhkp}{$\operatorname{UHKP}(\df,\beta)$} in \eqref{e-od1}. Consider a continuous time jump process with symmetric measure $\nu$ satisfying \ref{df}  and the Dirichlet form 
\begin{equation}
\mathcal{E}(f,f) = \int_X \int_X \left(f(x)-f(y) \right)^2 J(x,y) \nu(dx) \nu(dy)
\end{equation}
on $L^2(X,\nu)$, where $J$ satisfies \eqref{e-jb}. The condition \eqref{e-jb} on the jump kernel $J$ of the Dirichlet form should be interpreted as the continuous time analog of \ref{j-beta}. By comparison of energy measures  and the measures $\mu$ and $\nu$, we obtain Nash and cut-off Sobolev inequalities for the Dirichlet form $\mathcal{E}$ with symmetric measure $\nu$. By the same argument as the proof of \eqref{e-od1}, we obtain the continuous time analogue of \hyperlink{uhkp}{$\operatorname{UHKP}(\df,\beta)$} for the heat kernel of the continuous time process associated with the above Dirichlet form $\mathcal{E}$ on $L^2(X,\nu)$. 

Now, let us consider a graph $(X,d)$ satisfying \ref{usg} and \ref{lsg}  with $d_f=100,d_w=101$. 
As mentioned above, the heat kernel corresponding to $\left(\mathcal{E}, L^2(X,\nu) \right)$ satisfies the continuous time analogue of \hyperlink{uhkp}{$\operatorname{UHKP}(\df,\beta)$} for all $\beta \in [2,101)$. We note that the results of \cite{GHL} imply the continuous time analogue of \hyperlink{uhkp}{$\operatorname{UHKP}(\df,\beta)$} for the case $\beta \in (100,101)$ -- see \cite[Corollary 6.14]{GHL}. However, in the case $\beta \in [2,100)$ the exit time and survival time estimates appearing in \cite[Theorems 2.1 and 2.3]{GHL} are \emph{a priori} difficult to verify.
\end{example}

\subsection*{Acknowledgement}
We thank Martin Barlow for inspiring conversations and for his helpful suggestions on an earlier draft. We thank Tom Hutchcroft for proofreading part of the manuscript.

As the authors were working on this project, another group, Zhen-Qing Chen, Takashi Kumagai, and Jian Wang, were pursuing similar goals based on somewhat different ideas \cite{CKW}. Both groups obtain similar final results regarding the heat kernel estimates \ref{hkp}, independently. We believe that both works will prove useful in future progress. The work \cite{CKW} concerns similar estimates in the continuous time setting and allows for more general volume growth and space-time scaling. Further their work involves a weaker version of cutoff Sobolev inequality. A crucial step in their proof involves a mean value inequality proved using an iteration procedure by repeated application of Faber-Krahn and cutoff-Sobolev inequalities.

\emph{Added in revision}: A more recent preprint of Grigor'yan, Hu, and Hu \cite{GHH}  also independently addresses similar questions using a different approach. We would also like to point out that \cite{CKW2,CKW3} addresses the stability of Harnack inequalities and the relationship between Harnack inequalities and heat kernel estimates in the context of jump processes.
\bibliographystyle{amsalpha}

\begin{thebibliography} {BGK}

\bibitem{AB} S. Andres, M. T. Barlow, \textit{Energy inequalities for cutoff-functions and some applications}, J. Reine Angew. Math. 699 (2015), 183\textendash215.  \mr{3305925}

\bibitem{BCLS}
 Bakry, D.;  Coulhon, T.; Ledoux, M;  Saloff-Coste, L.
 {Sobolev Inequalities in Disguise},
 {\em Indiana Univ. Math. J. }{\bf 44} (1995), no. 4, 1033\textendash1074. \mr{1386760}

\bibitem{Bar0} M. T. Barlow, \textit{Diffusions on fractals}, Lectures on probability theory and statistics (Saint-Flour, 1995), 1\textendash121, Lecture Notes in Math., 1690, Springer, Berlin, 1998. \mr{1668115}

\bibitem{Bar} M. T. Barlow, \textit{Which values of the volume growth and escape time exponent are possible for a graph?},  Rev. Mat. Iberoamericana 20 (2004), no. 1, 1\textendash31. \mr{2076770}

\bibitem{BB1} M. T. Barlow, R. F. Bass, \textit{Brownian motion and harmonic analysis on Sierpinski carpets},
Canad. J. Math. 51 (1999), no. 4, 673\textendash744. \mr{1701339}

\bibitem{BB2}  M. T. Barlow, R. F. Bass, \textit{Random walks on graphical Sierpinski carpets}, Random walks and discrete potential theory (Cortona, 1997), 26\textendash55,
Sympos. Math., XXXIX, Cambridge Univ. Press, Cambridge, 1999. \mr{1802425}

\bibitem{BB3} M. T. Barlow, R. F. Bass, \textit{Stability of parabolic Harnack inequalities}, Trans. Amer. Math. Soc. 356 (2004), no. 4, 1501\textendash1533 \mr{2034316}

\bibitem{BBK0}M. T. Barlow, R. F. Bass, T. Kumagai, \textit{Stability of parabolic Harnack inequalities on metric measure spaces}, J. Math. Soc. Japan 58 (2006), no. 2, 485\textendash519. \mr{2228569}

\bibitem{BBK} M. T. Barlow, R. F. Bass, T. Kumagai, \textit{Parabolic Harnack inequality and heat kernel estimates for random walks with long range jumps}, Math. Z. 261 (2009), 297-320. \mr{2457301}


\bibitem{BCK} M. T. Barlow, T. Coulhon, T. Kumagai, \textit{Characterization of sub-Gaussian heat kernel estimates on strongly recurrent graphs},   Comm. Pure Appl. Math. 58 (2005), no. 12, 1642\textendash1677. \mr{2177164}

\bibitem{BGK}
M. T. Barlow,  A. Grigor'yan, T. Kumagai,  \textit{Heat kernel upper bounds for jump processes and the first exit time}, J. Reine Angew. Math.,  626 (2009) 135\textendash157 \mr{2492992}

\bibitem{BP} M. T. Barlow, E. A. Perkins, \textit{Brownian motion on the Sierpinski gasket},
Probab. Theory Related Fields 79 (1988), no. 4, 543\textendash623. \mr{0966175}

\bibitem{BGT}
N. H. Bingham, C. M. Goldie, and J. L. Teugels, \textit{Regular variation}, Encyclopedia of Mathematics and its Applications, vol. 27, Cambridge University
Press, Cambridge, 1987.  \mr{898871}

\bibitem{BL}
R. F. Bass, D. A. Levin, \textit{Transition probabilities for symmetric jump processes}, Trans. Amer. Math. Soc. 354, no. 7 (2002), 2933-2953 \mr{1895210}

\bibitem{CKS} E. A. Carlen, S. Kusuoka, D. W. Stroock, \textit{Upper bounds for symmetric Markov transition functions}, Ann. Inst. Henri Poncar\'{e} - Probab. Statist., 23 (1987), 245\textendash287. \mr{0898496}


\bibitem{CK1} Z.-Q. Chen, T. Kumagai, \textit{Heat kernel estimates for stable-like processes on $d$-sets}, Stochastic Process Appl. 108 (2003), 27-62.  \mr{2008600}

\bibitem{CK2} Z.-Q. Chen, T. Kumagai, \textit{Heat kernel estimates for jump processes of mixed types on metric measure spaces}, Stochastic Process Appl. 108
(2003), 27-62.  \mr{2357678}

\bibitem{CKW} Z.-Q. Chen, T. Kumagai, J. Wang, \textit{Stability of heat kernel estimates for symmetric jump processes on metric measure spaces}, \arxiv{1604.04035}

\bibitem{CKW2} Z.-Q. Chen, T. Kumagai, J. Wang, \textit{Stability of parabolic Harnack inequalities for symmetric non-local Dirichlet forms}, \arxiv{arXiv:1609.07594}.

\bibitem{CKW3} 
Z.-Q. Chen, T. Kumagai, J. Wang, \textit{Elliptic Harnack inequalities for symmetric non-local Dirichlet forms}, \arxiv{1703.09385}.

\bibitem{Cou}
T. Coulhon, \textit{Ultracontractivity and Nash Type Inequalities} Jour. of Func. Anal., 141, 510\textendash539 (1996)  \mr{1418518}

\bibitem{CG} T. Coulhon, A. Grigor'yan, \textit{On-diagonal lower bounds for heat kernels and Markov chains}, Duke Math. J. 89 (1997), no. 1, 133\textendash199. \mr{1458975}

\bibitem{Dav1} E. B. Davies, \textit{Explicit constants for Gaussian upper bounds on heat kernels}, Amer. J. Math. 109 (1987), no. 2, 319–333. \mr{0882426}

\bibitem{Dav2} E. B. Davies, \textit{Uniformly elliptic operators with measurable coefficients}, J. Funct. Anal. 132 (1995), no. 1, 141–169.  \mr{1346221}

\bibitem{Del}  T. Delmotte, \textit{Parabolic Harnack inequality and estimates of Markov chains on graphs}, Revista Mat. Iberoamericana, 15, (1999). \mr{1681641}

\bibitem{Gri} A. Grigor'yan,  \textit{Heat kernels and function theory on metric measure spaces}, Heat kernels and analysis on manifolds, graphs, and metric spaces (Paris, 2002), 143\textendash172, Contemp. Math., 338, Amer. Math. Soc., Providence, RI, 2003. \mr{2039954}

\bibitem{GHH} 
A. Grigor'yan, E. Hu, J. Hu,
Two-sided estimates of heat kernels of jump type Dirichlet forms,
preprint available at \url{https://www.math.uni-bielefeld.de/~grigor/gcap.pdf}
(2016).

\bibitem{GHL} A. Grigor'yan,  J. Hu,  K.-S. Lau,  \textit{Estimates of heat kernels for non-local regular Dirichlet forms},
Trans. Amer. Math. Soc. 366 (2014), no. 12, 6397 \textendash 6441. \mr{3267014}

\bibitem{GK} A. Grigor'yan, T. Kumagai, \textit{On the dichotomy in the heat kernel two sided estimates}, Analysis on graphs and its applications, 199\textendash210, Proc. Sympos. Pure Math., 77, Amer. Math. Soc., Providence, RI, 2008. \mr{2459870}

\bibitem{GT0} A. Grigor'yan, A. Telcs, \textit{Sub-Gaussian estimates of heat kernels on infinite graphs}, Duke Math. J. 109 (2001), no. 3, 451\textendash510. \mr{1853353}

\bibitem{GT} A. Grigor'yan, A. Telcs, \textit{Harnack inequalities and sub-Gaussian estimates for random walks}, Math. Ann. 324 ,  551-556 (2002). \mr{1938457}

\bibitem{HK} B. M. Hambly, T. Kumagai, \textit{Transition density estimates for diffusion processes on post critically finite self-similar fractals}, Proc. London Math. Soc. (3) 78 (1999), no. 2, 431\textendash458.  \mr{1665249}

\bibitem{HL} J. Hu and X. Li, \textit{The Davies method revisited for heat kernel upper bounds of regular Dirichlet forms on metric measure spaces}, \arxiv{1605.05548}


\bibitem{Jon} O. D. Jones, \textit{Transition probabilities for the simple random walk on the Sierpinski graph},
Stochastic Process. Appl. 61 (1996), no. 1, 45\textendash69. \mr{1378848}

\bibitem{Kig} J. Kigami, \textit{Analysis on fractals}, Cambridge Tracts in Mathematics, 143. Cambridge University Press, Cambridge, 2001. viii+226 pp. ISBN: 0-521-79321-1 \mr{1840042}

\bibitem{Kum} T. Kumagai, \textit{Some remarks for stable-like jump processes on fractals}, Fractals in Graz 2001, 185\textendash196,
Trends Math., Birkh\"{a}user, Basel, 2003. \mr{2091704}



\bibitem{Mey} P.-A. Meyer, \textit{Renaissance, recollements, m\'{e}langes, ralentissement de processus de Markov}, Ann. Inst. Fourier 25 (3-4) (1975), 464-497. \mr{0415784}

\bibitem{MOS13} E. Mossel, K. Oleszkiewicz, A. Sen, \textit{On reverse hypercontractivity}, Geom. Funct. Anal. 23 (2013), no. 3, 1062\textendash1097. \mr{3061780}

\bibitem{MS1} M. Murugan, L. Saloff-Coste, \textit{Transition probability estimates for long range random walks}, New York J. Math. 21 (2015), 723\textendash 757. \mr{3386544}

\bibitem{MS2} M. Murugan, L. Saloff-Coste, \textit{Anomalous threshold behavior of long range random walks}, Electron. J. Probab. 20 (2015), no. 74, 21 pp. \mr{3371433}

\bibitem{MS3}  M. Murugan, L. Saloff-Coste, \textit{Davies' method for anomalous diffusions},  Proc. Amer. Math. Soc. 145 (2017), no. 4, 1793--1804. \mr{3601569}

\bibitem{Sto} A. St\'{o}s, \textit{Symmetric $\alpha$-stable processes on d-sets},
Bull. Polish Acad. Sci. Math. 48 (2000), no. 3, 237\textendash245. \mr{1779007}
\end{thebibliography}

\end{document}